\newcommand{\ip}[2]{\ensuremath{\left<#1,#2\right>}}
\newcommand{\sett}[1]{\ensuremath{\left \{ #1 \right \}}}
\newcommand{\abs}[1]{\ensuremath{\left| #1 \right| }}
\newcommand{\LtRt}{L^2(\mathbb{R}^2)}
\newcommand{\Rtst}{{\mathbb{R}^2}}
\newcommand{\Ztst}{{\mathbb{Z}^2}}
\newcommand{\Zst}{{\mathbb{Z}}}
\newcommand{\Rst}{{\mathbb{R}}}
\newcommand{\Nst}{{\mathbb{N}}}
\newcommand{\bC}{{\mathbb{C}}}
\newcommand{\set}[2]{\big\{ \, #1 \, \big| \, #2 \, \big\}}
\newcommand{\norm}[1]{\lVert#1\rVert}
\newcommand{\supess}{\mathop{\operatorname{esssup}}}
\newcommand{\winzero}{{\varphi_0}}
\newcommand{\winzerol}{{\varphi_{0,\lambda}}}
\newcommand{\curvsystem}{\mathcal{C}_{\Lambda}}
\newcommand{\curvsystemdual}{\widetilde{\mathcal{C}}_{\Lambda}}
\newcommand{\frameop}{S_{\Lambda}}
\newcommand{\frameopz}{S_{\Lambda,0}}
\newcommand{\cframeop}{\widetilde{S}_{\Lambda}}
\newcommand{\hatframeop}{\widehat{\frameop}}
\newcommand{\dil}{\mathrm{Dil}}
\newcommand{\dilA}{\dil_{A}}
\newcommand{\dilAjk}{\dil_{A_{j,k}}}
\newcommand{\dilAjkstar}{\dil_{A_{j,k}^*}}
\newcommand{\Argjk}{\frac{2k\pi}{2^j}}
\newcommand{\volLambda}{\abs{\Lambda}}
\newcommand{\hatframeopg}{\widehat{S_{\Lambda,\gamma}}}
\newcommand{\hatframeopzero}{\widehat{S_{\Lambda,0}}}
\newcommand{\corr}{\Theta}
\newcommand{\rtheta}{(r\cos(\theta),r\sin(\theta))}
\newcommand{\rthetap}{(r'\cos(\theta'),r'\sin(\theta'))}
\newcommand{\cone}{V}
\newcommand{\step}[1]{\medskip \noindent{\bf #1}}
\newcommand{\newdelta}{\tau}
\newcommand{\deltaover}{\tau}
\newcommand{\deltapsi}{\varepsilon}
\newcommand{\moment}{{\varsigma}}
\newcommand{\newa}{{\rho_1}}
\newcommand{\newb}{{\rho_2}}
\newcommand{\asympt}{e^{-\newdelta/a^2}+e^{-\newdelta/b^2}}
\newtheorem{lemma}{Lemma}[section]
\newtheorem{theorem}[lemma]{Theorem}
\newtheorem{prop}[lemma]{Proposition}
\newtheorem{rem}[lemma]{Remark}
\title{Exact and approximate expansions with pure Gaussian wavepackets}
\author{Maarten V. de Hoop}
\address{Department of Mathematics, Purdue University, 150 N. University Street, West Lafayette IN 47907, USA.}
\email{mdehoop@purdue.edu}
\author{Karlheinz Gr\"{o}chenig}
\address{Faculty of Mathematics, University of Vienna \\ 
Oskar-Morgenstern-Platz 1, A-1090 Wien, Austria}
\email{karlheinz.groechenig@univie.ac.at}
\author{Jos\'e Luis Romero}
\address{Faculty of Mathematics, University of Vienna \\ 
Oskar-Morgenstern-Platz 1, A-1090 Wien, Austria}
\email{jose.luis.romero@univie.ac.at}
\date{}
\subjclass[2010]{41A25, 42B05, 42C40}
\keywords{Gaussian wavepackets, wavefront, parabolic scaling}
\thanks{K.G. and J.L.R gratefully acknowledge support from the Austrian 
Science Fund (FWF) [P26273-N25] and [M1586-N25]. M. dH. was supported in 
part by National Science Foundation grant CMG DMS-1025318.}
\begin{document}
\begin{abstract}
We construct frames of wavepackets produced by parabolic dilation,
rotation and translation of (a finite sum of) Gaussians and give asymptotics on the analogue of Daubechies frame
criterion. We show that the coefficients in the corresponding approximate expansion decay fast away from the wavefront
set of the original data.
\end{abstract}
\maketitle

\section{Introduction}

Wavepackets are a powerful  tool for the microlocal analysis of (Fourier
integral) operators and their wave front sets~\cite{cofe78, sm98, sm98-1}. A ``second
dyadic decomposition''~\cite{stein93} with parabolic scaling and
rotations endows them with the necessary  directional
sensitivity  to resolve singularities. Related objects, such as
curvelets and shearlets, incorporate similar features, in particular
the parabolic dilations. Frames of these also facilitate methods to
sparsify Fourier integral operators and resolve the wavefront set; the emphasis is often on  the
applications to  image processing and numerical aspects. See~\cite{cado04,cado05,gula07,shear12,andewe12}
for representative references.

We consider wavepackets produced by parabolic dilation, rotation and translation
of a function $\varphi:\Rtst \to \bC$, given by 
\begin{align}
\label{eq_packet_intro}
\varphi_{j,k,\lambda}(x)
:= 8^{j/2} \varphi (A_{j,k} x -\lambda),
\qquad (j \geq 1, 0 \leq k \leq 2^j-1, \lambda \in \Lambda),
\end{align}
where
\begin{align*}
A_{j,k} := \begin{bmatrix}4^j&0\\0&2^j\end{bmatrix}
\begin{bmatrix}\cos(\frac{2k\pi}{2^j}) & -\sin(\frac{2k\pi}{2^j})\\
\sin(\frac{2k\pi}{2^j})&\cos(\frac{2k\pi}{2^j})
\end{bmatrix},
\end{align*}
and $\Lambda \subseteq \Rtst$ is a lattice. If $\varphi$ is rapidly decaying, the packet
$\varphi_{j,k,\lambda}$ is localized near $A_{j,k}^{-1} \lambda$ and aligned to an angle of
$\frac{-2\pi k}{2^j}$ radians. 
For  the coarse scales we use translates of a second window $\winzerol
:= \winzero(\cdot-\lambda)$, which typically is a symmetric  Gaussian
centered at 0. The complete collection of such wavepackets is then the set 
\begin{align}
\label{eq_curv_system}
\curvsystem := \set{\winzerol}{\lambda \in \Lambda}
\cup \set{\varphi_{j,k,\lambda}}{\lambda \in \Lambda, j \geq 1, 0 \leq k \leq 2^j-1}.
\end{align}
Our  contribution is twofold: First  we consider exact expansions and approximate expansions of
$\LtRt$ functions in terms of these packets using Gaussian
windows. Second we study the wavefront set with respect to a  frame of
wavepackets. 

The motivation for using purely Gaussian windows comes from highly
efficient numerical algorithms \cite{ancate12}, with applications in
reflection and global seismology. The Gaussian is the window of choice,
because it has the best localization in phase-space and yields the
best resolution. The use of wavepackets that consist of pure Gaussians
has several technical advantages because a number of computations can be done explicitly.

Applications include data compression, denoising,
and wavefield recovery from finite data \cite{ACdHa, ACH10,
ATMdH}, and wave propagation and inverse scattering described by
Fourier integral operators associated with canonical graphs. For the
latter application, see \cite{DdH}. Purely Gaussian wave packets are
effective in the parametrix construction of the wave equation. Here
the  the initial data are decomposed with respect to a frame of such
wave packets, because a Gaussian wave packet naturally matches the
initial conditions for a multi-scale Gaussian-beam-like solution
\cite{qiyi10-1}. Gaussian beams are designed to account for the
formation of caustics. The frame property facilitates the derivation
of error estimates
for the parametrix which decay under scale refinement. One can also
generate multi-scale Gaussian-beam-like solutions from the
decomposition of boundary values, which play a role in the development
of inverse scattering
algorithms via wavefield cross correlation that are based on
efficient reverse-time continuation   \cite{OSdH}.

We refer the reader to \cite{ancate12} for a comparison of curvelets
and Gaussian wavepackets and ~\cite{ant13} for the
use of Gaussians as wavelets for the continuous wavelet transform.
Whereas the recent results  on  curvelets and shearlets emphasize
the construction of tight frames with compactly supported windows  or
bandlimited windows~\cite{alcamo04-1,CDo05,gula07,dastte11,rani12,nira12,kikuli12}, we understand 
much less about   the spanning properties and  the frame properties of
 wavepackets with Gaussian windows. For instance, Andersson, Carlsson,
 and Tenorio~ \cite{ancate12} treat the effective \emph{approximation} of bounded compactly-supported functions by
this kind of packets, but they left open the question whether a  full
expansion by a set $\mathcal{C}_\Lambda $ of Gaussian wavepackets represents
every function   in $\LtRt$. Likewise, most of the work on Gaussian
beams~\cite{ACH10,QL10,qiyi10-1,shli09} emphasizes the algorithmic aspects and contents itself
with good numerical approximations. The question of \emph{exact}
representations by a wavepacket expansion is omitted.

Our work complements the numerical  results in \cite{ancate12,qiyi10-1}. 
We will show in Theorem~\ref{th_crit} that the wavepacket system 
$\mathcal{C}_\Lambda$ constitutes a frame
for a large class of Gaussian-like windows and suitable lattices
$\Lambda$.

Using a Gaussian window  adds numerous (technical) difficulties to the
rigorous analysis of wavepacket expansions. On the one hand, the
Gaussian does not have vanishing moments, which is an absolute must in
this analysis.  On the other hand, the Gaussian is neither compactly
supported nor bandlimited; this requires delicate estimates to control
the overlap of dilates and rotations of the Gaussian (see
Proposition~\ref{prop_scale_correlation}). 

 The problems due to the
lack of vanishing moments  can be  circumvented,  as 
with Morlet's wavelets,  by 
considering linear combinations of modulated Gaussians. As a fundamental example we consider windows of the form,
\begin{align}
\label{eq_window_intro}
&\widehat{\varphi}(\xi_1,\xi_2) :=
\sum_{k=1}^N
a_k e^{-\left(\delta_0 \abs{\xi_1-t_k}^2+\delta_k\abs{\xi_2}^2\right)},
\qquad
\xi=(\xi_1,\xi_2) \in \Rtst,
\end{align}
where the points $t_1, \ldots, t_N$, the dilation parameters $\delta_0, \ldots, \delta_N$
and the coefficients $a_1, \ldots, a_N$ are chosen so that
\begin{align}
\label{eq_intro_moments}
\widehat{\varphi}(0)=\partial_{\xi_1} \widehat{\varphi}(0)= \ldots =
\partial^N_{\xi_1} \widehat{\varphi}(0)=0.
\end{align}
For such a choice of $\varphi$, the wavepackets in \eqref{eq_packet_intro} are finite linear combinations
of Gaussians with different eccentricities, scales and centers. It should be noted that this is different
from other Gaussian wavepackets that enforce the vanishing moments through a scale-dependent frequency cut-off
(see for example \cite{andesmuh07}).

The formula in \eqref{eq_window_intro} allows for a number of rather different windows. One possibility is to let
$\widehat{\varphi(\xi)}$ be essentially a translated Gaussian $e^{-\abs{\xi-(0,t_1)}^2}$, where $t_1 >>0$.
This window is then corrected with other Gaussians centered near the origin, so as to
obtain several vanishing moments. If $t_1 >>0$, the coefficients corresponding to these correcting terms can be
numerically neglected. The case where $t_1$ is not large enough to make the correcting terms negligible is also of
practical interest. In this case the window $\varphi$ does not look like a pure Gaussian (see Figure \ref{fig_win}).
Nevertheless, an expansion in terms of the packets in \eqref{eq_window_intro} leads to an expansion in terms of
Gaussians.

\begin{figure}
\centering
\begin{minipage}{0.49\linewidth}\centering
\includegraphics[scale=0.30]{./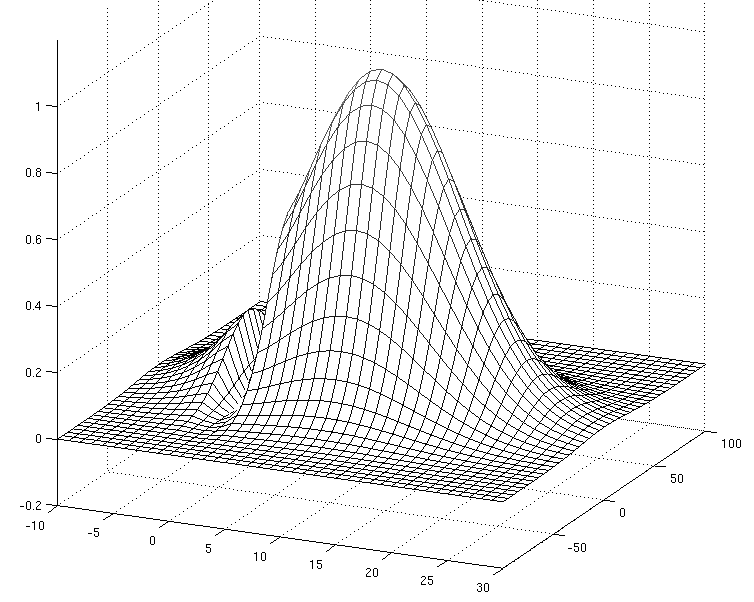}
\end{minipage}
\begin{minipage}{0.49\linewidth}\centering
\includegraphics[scale=0.75]{./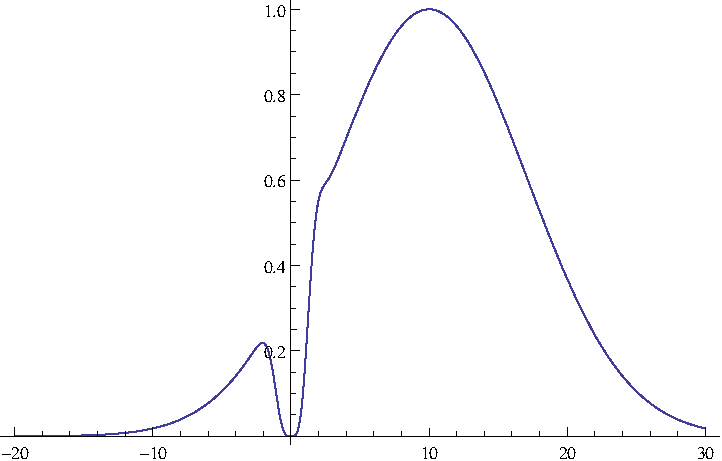}
\end{minipage}
\captionsetup{singlelinecheck=off}    
\caption[.]{A possible choice of $\widehat{\varphi}$ with vanishing moments of order 3.
On the left, a plot of $\widehat{\varphi}$, and on the right a plot of its restriction to the $\xi_1$ axis.
{\tiny \begin{align*}
&\widehat{\varphi(\xi)}=g_1(\xi_1)g_2(\xi_2)
\\
&g_1(t) \approx e^{-\frac{(t-10)^2}{100}} + 0.578 e^{-(t-1)^2} -  3.45205 e^{-(t-0.5)^2}
+ 4.66167 e^{-(t-0.25)^2} - 2.27129 e^{-t^2}
\\
&g_2(t)= e^{-\frac{t^2}{1100}}.
\end{align*}}
\label{fig_win} 
}
\end{figure}

Throughout the article we make the following assumptions on the windows $\winzero, \varphi \in L^2(\Rtst)$.
\begin{align}
\label{eq_assum_cov}
&0 < A \leq 
\abs{\widehat{\winzero}(\xi)}^2 + 
\sum_{j \geq 1} \sum_{k=0}^{2^j-1} \abs{\widehat{\varphi}(B_{j,k} \xi)}^2
\leq B < +\infty,
\mbox{ where } B_{j,k} := (A^*_{j,k})^{-1},
\\
\label{eq_assum_decay_window_1_plus}
&\abs{\widehat{\winzero}(\xi)} \lesssim e^{-\delta\abs{\xi}^2},
\\
\label{eq_assum_decay_window_2_plus}
&\abs{\widehat{\varphi}(\xi)} \lesssim \min(1,\abs{\xi_1}^{\moment}) e^{-\delta\abs{\xi}^2},
\mbox{ for some } \delta>0 \mbox{ and } \moment>2.
\end{align}
The assumption in \eqref{eq_assum_cov} means that the windows are wide enough so as to cover the whole 
frequency plane (see figure \ref{fig_wsum}). The Gaussian wavepackets in 
\eqref{eq_window_intro} clearly satisfy the
decay conditions in \eqref{eq_assum_decay_window_1_plus} and \eqref{eq_assum_decay_window_2_plus} while the vanishing
moments in the direction $\xi_1$ are granted by \eqref{eq_intro_moments}.

Our main result shows that $\mathcal{C}_\Lambda $ is a frame for
$L^2(\mathbb{R}^2)$ for sufficiently  fine lattices $\Lambda $. In
Theorem~\ref{th_frame}   we give asymptotics on the lattice parameters
required for these packets to generate a frame. The proof uses the
method that was developed by  Daubechies  for wavelets~\cite{daubechies90}. We
will  examine the quantities that appear in (the analogue of)
Daubechies' criterion for wavelets, and then  bound  the
correlation between rotations and dilations of the window. A similar
approach (involving fewer technicalities) was recently carried out in \cite{kikuli12} for the
so-called cone-adapted shearlets, in order to obtain compactly supported atoms.
The fact that Daubechies' criterion can be satisfied at
all proves that every $\LtRt$ function can be expanded in terms of
pure Gaussian packets. Thus our main result provides theoretical
support for the accuracy of numerical wavepacket methods
in~\cite{ancate12}. 

As a by-product of using Daubechies' criterion, we obtain a simple
approximate dual frame of the wavepacket system $\mathcal{C}_\Lambda $. The
main term of the associated frame operator can be written as a Fourier
multiplier. Applying the inverse of this Fourier multiplier to the
wavepackets we obtain  an approximate dual frame that can be easily
computed with   a simple  frequency filter. The error estimate
given in Theorem~\ref{th_dual} may be  used to justify when to use the simple
approximate expansion (without knowledge of a precise dual frame).
In the  applications to
geophysics that motivated us, the data are noisy and possess a
limited  numerical accuracy \cite{dode07}.  Therefore, the  explicit,
simple, but only approximate  expansion of the wavefield 
will be sufficient for many purposes in this context.

Our second contribution is the investigation of the wavefront set with
Gaussian wavepackets. We will  study the behavior  of the coefficients $\ip{f}{\varphi_{j,k,\lambda}}$
as $j \rightarrow +\infty$. Since $\mathcal{C}_\Lambda $ is parametrized by a
discrete set and is not invariant under translations and rotations, we
need to select the  parameters $k=k_j,\lambda=\lambda_j$ as a function
of the scale in order  to track a given
phase-space point. Thus the discrete description is more subtle
than the descriptions with a continuous transform in~\cite{folland89,cado05,gr11-8,kula09,gula09}. 
In Theorem~\ref{th_decay_wave} we will  show that if a  point does not belong to the wavefront set of $f$ with respect
to a Sobolev space $H^s$, then the corresponding coefficients decay like $4^{-js}$.
We  also consider the  approximate wavepacket  expansion of Section
3.4  and show that the coefficients decay fast in the scale
parameter $j$, away from the wavefront set of the \emph{original data}.
It is worth pointing out that 
wavefront sets can be characterized with other types of anisotropies besides the parabolic
one, see for example \cite[Theorem 3.29]{folland89}. The parabolic
scaling, however,  is essential for 
the efficient representation of functions with singularities along
smooth curves. 

\begin{figure}
\centering
\includegraphics[scale=0.50]{./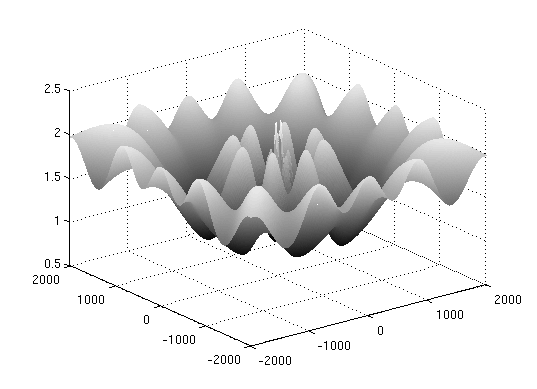}
\captionsetup{singlelinecheck=off}    
\caption[.]{A plot of the quantity in \eqref{eq_assum_cov} for the function
$\varphi$ from Fig. \ref{fig_win} and $\winzero(\xi)=e^{-\frac{\abs{\xi}^2}{10000}}$, showing
$A \approx 0.9503$ and $B \approx 2.2483$.}
\label{fig_wsum}
\end{figure}

\begin{figure}
\centering
\includegraphics[scale=0.4]{./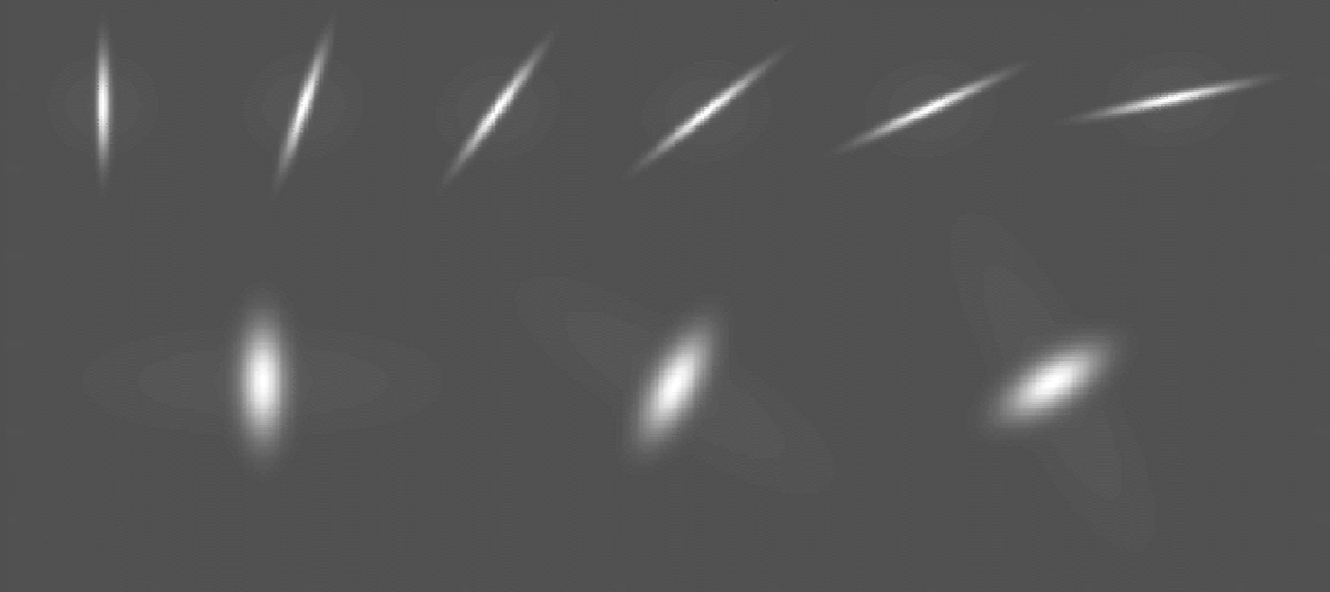}
\captionsetup{singlelinecheck=off}    
\caption[.]{A time-domain display of the wavepackets associated with the window depicted in Figure \ref{fig_win}
(absolute value).}
\end{figure}

\subsection{Related work}
The Gaussian wavepackets that we treat in this article share the geometric features of 
other parabolic wavepackets including curvelets and shearlets. Specifically,
the atoms are concentrated on elongated regions obeying the relation: \emph{width} $\approx$
\emph{length}$^2$ and oscillate across their ridge.
The packets in \eqref{eq_packet_intro}
are produced exactly by applying a parabolic dilation, rotation and translation to a single window function.
In constrast, the classic Gaussian wavepackets from \cite{cofe78,sm98-1,andesmuh07} use an additional frequency cut-off
to enforce vanishing moments. The curvelets from \cite{cado04} are produced by parabolic dilation and rotation of a
slightly different window at each scale. In the case of shearlets, rotations are replaced by the shear maps $S_s(x,y) =
(x+sy,y)$. Wave packets based on tilings of frequency space are
discussed in ~\cite{alcamo04-1,nira12,camoro13,ni13}.

While the differences between the various parabolic wavepackets are
very relevant in practice, 
their asymptotic properties are  similar. The Gaussian wavepackets in
\eqref{eq_packet_intro} are an example of 
\emph{curvelet molecules} from \cite{cade05} and therefore  they
provide the same sparse approximation 
properties as curvelets for functions with discontinuities along smooth edges \cite{cado04}. The notion of curvelet
molecules and the related notion of \emph{shearlet molecules} in
\cite{gula08} are  
in turn particular cases of the notion of \emph{parabolic molecules},
as recently introduced in \cite{grku13}.
In that article it is shown that all expansions with adequate systems of parabolic molecules share similar asymptotic
properties. As a consequence, our results on the decay of frame coefficients away from the wavefront set can be
transferred to other parabolic systems.

Closest to our work is the construction of curvelet-type expansions in
\cite{rani12}. Whereas  we construct frames
by using an analogue of Daubechies criterion for wavelets, the
curvelet-type frames for $\LtRt$ in \cite{rani12}  are 
 obtained  by using a perturbation argument. The main focus of that
 work is the production  of 
compactly-supported atoms, but the results also apply to sums of
modulated Gaussians.  As mentioned in  \cite[Section 6]{rani12}, 
the perturbation argument  yields a frame where each element is a
linear combination of dilated, 
rotated and translated Gaussians. However, the coefficients in that linear
combination depend on the indices of the  particular 
frame element, but the number of terms is proved to be uniformly
bounded.  By  contrast,  the packets in \eqref{eq_packet_intro}
consist exactly of parabolic dilations, rotations 
and translations of a \emph{single} function, which may be taken to be
a fixed  linear combination of Gaussians.

The Gaussian wavepackets with parabolic scaling in \eqref{eq_packet_intro} provide an optimal representation of
functions with singularities along smooth curves \cite{cado04}, sparsify Fourier wave propagators \cite{cade05},
and are particularly
useful in geophysics \cite{dode07}. For other kinds of applications, other geometries are more adequate. For example, in
\cite{deyi07} it is shown that a dictionary of wave-atoms with the scaling ``\emph{wavelength} $\approx$
\emph{diameter}$^2$'' is optimal for the representation of oscillatory patterns (texture).

\subsection{Organization}
The paper is organized as follows. In Section~2 we  derive certain
estimates on rotation-dilation overlaps. This is perhaps the main
technical contribution when working with Gaussian wavepackets. In
Section~3 we  develop a Daubechies-like frame criterion and derive conditions
for when  the
wavepacket system $\mathcal{C}_\Lambda $ is a frame. We also study
the approximate expansions. In Section~4  we study the complement of
the wavefront set and describe it by the decay of the coefficients $\ip{f}{\varphi_{j,k,\lambda}}$
as $j \rightarrow +\infty$.  Finally we show that similar estimates hold
for the coefficients in the approximate expansion related to Daubechies' criterion.

\vspace{ 3 mm}

\textbf{Notation:} In the sequel we use the following notation. We let $1_E$ denote the characteristic function of a
set $E \subseteq \Rtst$ and $\abs{x}$ denote the Euclidean norm of a vector $x \in \Rtst$.
The Fourier transform is normalized as
$\mathcal{F}f(\xi)=\hat{f}(\xi):=\int_\Rtst f(x) e^{-2\pi i \xi x} \, dx$. Translations and dilations of function
$f:\Rtst \to \bC$
are defined as
\begin{align*}
&\dilA f(x) := \abs{\det(A)}^{1/2} f(Ax),
\quad A \in \Rst^{2 \times 2},
\\
&T_y f(x) := f(x-y).
\end{align*}
With this notation, $\varphi_{j,k,\lambda} = \dil_{A_{j,k}} T_\lambda \varphi$. Note also that with
this notation $\dil_{A \cdot B} f = \dil_B \dil_A f$.

Let us further denote the parabolic dilations and the rotations by
\begin{align*}
D_j=\begin{bmatrix}4^j&0\\0&2^j\end{bmatrix},
\qquad
R_\theta=\begin{bmatrix}\cos(\theta) & -\sin(\theta)\\ \sin(\theta)&\cos(\theta)  \end{bmatrix}.
\end{align*}
Hence $A_{j,k} = D_j R_{\Argjk}$ and $B_{j,k}:=(A^*_{j,k})^{-1} = D_j^{-1} R_{\Argjk}$.

If the lattice $\Lambda$ is given by $\Lambda = P \Ztst$, with $P \in \Rst^{2\times 2}$ invertible,
then we denote its \emph{volume} by $\volLambda = \abs{\det{P}}$ and its \emph{dual lattice}
by $\Gamma=(P^{-1})^* \Ztst$.

\section{Estimates for rotation-dilation overlaps}
\label{sec_overlaps}
We now prove certain technical estimates on rotation-dilation overlaps that will be needed throughout the article.
For $f: \Rtst \to \bC$ let the \emph{star-norm} of $f$ be defined by
\begin{align}
\label{eq_star_norm}
\norm{f}_* := \supess_{\xi \in \Rtst} \sum_{j \geq 1} \sum_{k=0}^{2^j-1} \abs{f(B_{j,k}\xi)}.
\end{align}

Let us define the circular sectors,
\begin{align}
\label{eq_deff_Vst}
V_{s,t} := \left\{
(r \cos(\theta), r \sin(\theta)):
\begin{array}{l}
2^s \leq r \leq 2^{s+1}, 0 \leq \theta \leq \pi/2,
\\
2^{-(t+1)} \leq \cos(\theta) \leq 2^{-t}
\end{array}\right\},
\quad (s \in \Zst, t \geq 0).
\end{align}
\begin{figure}
\begin{center}
 \includegraphics[scale=0.8]{./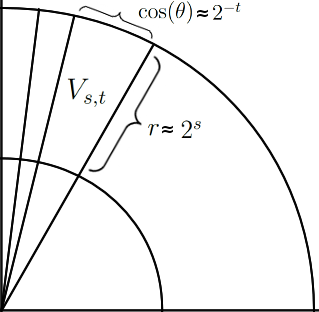} 
\end{center}
\caption{The set $V_{s,t}$ defined in \eqref{eq_deff_Vst}.}
\end{figure}
The next lemma estimates the overlaps of the orbit of each of these sets under parabolic dilations and rotations.
\begin{lemma}
\label{lemma_Vst}
Let $V_{s,t}$ be defined by \eqref{eq_deff_Vst}. Then
$\norm{1_{V_{s,t}}}_* \lesssim 4^t$.
\end{lemma}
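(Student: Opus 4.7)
The plan is to fix a nonzero $\xi = (R\cos\alpha, R\sin\alpha) \in \Rtst$ and bound the count
\[
N(\xi) := \#\big\{(j, k) : j \geq 1,\ 0 \leq k \leq 2^j - 1,\ B_{j,k}\xi \in V_{s,t}\big\}
\]
uniformly in $\xi$ (and in $s$, though only $t$ appears in the bound).

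First I would write $B_{j,k}\xi = (R\cos\beta_{j,k}/4^j,\ R\sin\beta_{j,k}/2^j)$ where $\beta_{j,k} := \alpha + 2k\pi/2^j$. Membership in $V_{s,t}$ then splits into an \emph{angular} condition (first quadrant, together with $4^j\tan^2\beta_{j,k} + 1 \in [4^t, 4^{t+1}]$, which comes from translating $\cos\theta \in [2^{-t-1}, 2^{-t}]$ via $\cos\theta = (1+4^j\tan^2\beta)^{-1/2}$) and a \emph{radial} condition $R^2\cos^2\beta_{j,k}(1 + 4^j\tan^2\beta_{j,k})/16^j \in [4^s, 4^{s+1}]$.

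Next, for each fixed $j$, I would estimate the Lebesgue measure of the admissible angular set $E_{j,t} := \{\beta \in [0, \pi/2] : 4^j\tan^2\beta + 1 \in [4^t, 4^{t+1}]\}$ using the substitution $u = \tan\beta$. This naturally splits: when $j \geq t$ the admissible $\beta$'s are small, yielding $|E_{j,t}| \lesssim 2^{t-j}$; when $j < t$ they cluster near $\pi/2$, and setting $\gamma = \pi/2 - \beta$ gives $|E_{j,t}| \lesssim 2^{j-t}$. Thus $|E_{j,t}| \lesssim 2^{-|j-t|}$. Since $\{\beta_{j,k}\}_{k=0}^{2^j-1}$ is an arithmetic progression of step $2\pi/2^j$, the number of admissible $k$'s for a given $j$ is at most $|E_{j,t}| \cdot 2^j/(2\pi) + 1 \lesssim 2^{j-|j-t|} + 1 \leq 2^t + 1$.

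I would then use the radial condition to restrict $j$ to $O(1)$ values. For $\beta \in E_{j,t}$, a brief computation shows $|B_{j,k}\xi| \sim R\,2^t/4^j$ when $j \geq t$ (since $\cos\beta \approx 1$), whereas $|B_{j,k}\xi| \sim R/2^j$ when $j < t$ (since $\cos\beta \approx 2^{j-t}$). The constraint $|B_{j,k}\xi| \in [2^s, 2^{s+1}]$ therefore pins $j$ down to at most $O(1)$ dyadic scales in each regime. Combining the two estimates yields $N(\xi) \lesssim 2^t \leq 4^t$, which, upon taking the supremum in $\xi$, gives the desired bound $\norm{1_{V_{s,t}}}_* \lesssim 4^t$.

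The main obstacle is the careful bookkeeping across the two regimes $j \geq t$ and $j < t$, which geometrically correspond to whether $D_j V_{s,t}$ is taller than wide or vice versa; the argument has to linearize $\tan\beta$ near $0$ in one case and near $\pi/2$ in the other. The small-$t$ endpoint (notably $t=0$) also needs separate verification since the linearizations degenerate, but there $E_{j,0}$ still has length $\lesssim 2^{-j}$ and contributes only $O(1)$ admissible $k$'s per $j$, well within the claimed bound.
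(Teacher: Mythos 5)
Your proposal is correct, and at its core it implements the same two-step geometry as the paper's proof (an angular count of admissible rotations per scale, plus a radial argument showing only $O(1)$ scales contribute), but it is organized as a direct preimage count rather than a forward-image containment. The paper shows $D_j V_{s,t}$ is contained in a full sector $W_{j,s,t}$ of aperture $\approx 2^{-j}4^{t}$ and counts overlaps of its $2^j$ rotations (getting $\lesssim 4^t$), then controls the sum over $j$ by showing the annuli $C_{j,s,t}$ have $O(1)$ overlaps via the growth estimate \eqref{eq_rho_fast_growth_2} for $\rho(\alpha,j)$; you instead fix $\xi$, compute the admissible angular set $E_{j,t}$ exactly via the substitution $u=\tan\beta$, and count lattice points of the progression $\beta_{j,k}$ in it. Because you use the true angular measure $|E_{j,t}|\lesssim 2^{-|j-t|}$ of the image annular strip rather than the full sector from angle $0$, you obtain the sharper per-scale count $2^t$ in place of $4^t$ (more than sufficient for the stated bound), and your radial pinning of $j$ via $|B_{j,k}\xi|\sim R\,2^t/4^j$ (resp. $R/2^j$) is the preimage-side analogue of the paper's annulus-overlap lemma. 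Your attention to the two regimes $j\geq t$ versus $j<t$ and to the degenerate endpoint $t=0$ is exactly the right bookkeeping; I see no gap.
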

\begin{proof}
The effect of the parabolic dilation $D_j$ can be described (on the first quadrant) in the following way.
\begin{align}
\label{eq_parabolic}
&D_j (r\cos(\theta), r\sin(\theta))=(r'\cos(\theta'), r\sin(\theta')),
\\
&r'=\rho(\cos(\theta),j) r,
\\
\label{eq_tan}
&\tan(\theta')=2^{-j}\tan(\theta),
\end{align}
where $0 \leq \theta \leq \pi/2$, $r > 0$, and $\rho(\alpha,j) \geq 0$ is given by
\begin{align}
\rho(\alpha,j)^2 = \alpha^2 4^{2j} + (1-\alpha^2) 2^{2j},
\qquad (\alpha \in [0,1],  j \in \Nst).
\end{align}
We note that $\rho(\alpha,j)$ satisfies the growth estimates,
\begin{align}
&\rho(\alpha, j+1) \geq 2 \rho(\alpha, j),
\\
\label{eq_rho_fast_growth_2}
&\rho(\alpha/2, j+2) \geq 2 \rho(\alpha, j),
\qquad (\alpha \in [0,1],  j \in \Nst).
\end{align}
Note also that for fixed $j \in \Nst$, $\rho(\alpha,j)$ is increasing in $\alpha$.

With this description, we estimate the image of $V_{s,t}$ under  the parabolic dilation $D_j$.
For $s \in \Zst$, $t \geq 0$ and $j \in \Nst$, let us define 
\begin{align*}
W_{j,s,t}:= \left\{ (r\cos(\theta),r\sin(\theta)) :
\begin{array}{c}
2^s \rho(2^{-(t+1)}, j) \leq r \leq  2^{s+1} \rho(2^{-t}, j) \\
0 \leq \theta \leq \pi/2 \min\{1,2^{-j} 4^{t+1}\}
\end{array}\right\}.
\end{align*}

We note that
\begin{align}
\label{eq_AjVst}
D_j V_{s,t} \subseteq W_{j,s,t}.
\end{align}
Indeed, if $\xi=\rtheta \in V_{s,t}$ and $D_j \xi=\rthetap$, then by \eqref{eq_parabolic},
\begin{align*}
r'=\rho(\cos(\theta),j)r \in r [\rho(2^{-(t+1)},j),\rho(2^{-t},j)]
\subseteq [2^s \rho(2^{-(t+1)}, j), 2^{s+1} \rho(2^{-t}, j)]. 
\end{align*}
Since $\xi$ belongs to the first quadrant, so does $D_j \xi$. Hence, $\theta' \in [0,\pi/2)$.
In addition, using \eqref{eq_tan} and the mean value theorem we estimate,
\begin{align*}
\theta' \leq \tan(\theta') = 2^{-j} \tan(\theta)
= 2^{-j} \theta (\cos(\tau))^{-2},
\end{align*}
for some $0 \leq \tau \leq \theta$. Hence, $\cos^2(\tau) \geq \cos^2(\theta) \geq 4^{-t-1}$ and,
$\theta' \leq 2^{-j} 4^{t+1} \pi/2$. This shows that \eqref{eq_AjVst} holds true.

Hence, $D_j$ maps $V_{s,t}$ into $W_{j,s,t}$, a circular sector of angle $\approx 2^{-j} 4^t$. The $2^j$ rotations
$R_{\Argjk} W_{j,s,t}$, $k=0, \ldots, 2^j-1$ have $\lesssim 4^t$ overlaps. Therefore, \eqref{eq_AjVst} 
allows us to estimate for every $j \in \Nst$ and $\xi \in \Rtst$,
\begin{align*}
&\sum_{k=0}^{2^j-1} 1_{V_{s,t}}(B_{j,k} \xi)
= \sum_{k=0}^{2^j-1} 1_{R_{-\Argjk} D_j V_{s,t}}(\xi)
= \sum_{k=0}^{2^j-1} 1_{R_{\Argjk} D_j V_{s,t}}(\xi)
\\
&\quad\leq
\sum_{k=0}^{2^j-1} 1_{R_{\Argjk} W_{j,s,t}}(\xi)
\lesssim 4^t 1_{C_{j,s,t}}(\xi),
\end{align*}
where the set $C_{j,s,t}$ is the annulus
\begin{align}
C_{j,s,t} := \set{\xi \in \Rtst}{2^s \rho(2^{-(t+1)}, j) \leq \abs{\xi} \leq 2^{s+1} \rho(2^{-t}, j)}.
\end{align}
Finally we note that these sets do not overlap too much as $j$ varies. Indeed,
if $C_{j,s,t} \cap C_{j',s,t} \not= \emptyset$ for $j \leq j'$, then
$2^s \rho(2^{-(t+1)}, j') \leq 2^{s+1} \rho(2^{-t}, j)$.
Hence, setting $\alpha := 2^{-t}$, we have that $\rho(\alpha/2, j') \leq 2 \rho(\alpha, j)$.
By \eqref{eq_rho_fast_growth_2} this implies that $j' \leq j+2$.
Hence,
\begin{align*}
\norm{1_{V_{s,t}}}_* \lesssim 4^t 
\sup_{\xi \in \Rtst} \sum_{j \geq 1} 1_{C_{j,s,t}}(\xi) \lesssim 4^t.
\end{align*}
\end{proof}
We now notice the following invariance property of the star-norm.
\begin{lemma}
\label{lemma_sym}
Let $f: \Rtst \to \bC$ and let $S:\Rtst \to \Rtst$ be one of the four symmetries
$(\xi_1,\xi_2) \mapsto (\pm \xi_1, \pm \xi_2)$. Then $\norm{f \circ S}_* = \norm{f}_*$.
\end{lemma}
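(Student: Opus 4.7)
The plan is to exhibit, for each of the four symmetries $S:(\xi_1,\xi_2)\mapsto(\pm\xi_1,\pm\xi_2)$, an intertwining relation of the form
$S\,B_{j,k}=B_{j,\sigma_j(k)}\,S'$,
where $S'$ is again one of the four symmetries and $\sigma_j$ is a permutation of $\{0,1,\dots,2^j-1\}$. Granted such a relation, the identity $\norm{f\circ S}_*=\norm{f}_*$ is essentially automatic: for each fixed $\xi$, the inner double sum $\sum_{j,k}\abs{f(S\,B_{j,k}\xi)}$ can be reindexed via $\sigma_j$ at every scale $j$, yielding $\sum_{j,k}\abs{f(B_{j,k}\,S'\xi)}$; since $S'$ is a (measure-preserving) bijection of $\Rtst$, the essential supremum over $\xi$ equals the essential supremum over $\eta=S'\xi$, and we recover $\norm{f}_*$.

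The intertwining itself is verified by case analysis from three elementary facts: $R_\pi=-I$ commutes with every matrix; the diagonal parabolic dilations $D_j$ commute with both reflections in the coordinate axes; and reflection across the $\xi_1$-axis, $S_2:=\mathrm{diag}(1,-1)$, conjugates rotations by inversion, $S_2 R_\theta=R_{-\theta}S_2$. For $S=R_\pi$, left-multiplying $B_{j,k}=D_j^{-1}R_{2k\pi/2^j}$ by $R_\pi$ shifts the rotation angle by $\pi$, so one may take $\sigma_j(k)\equiv k+2^{j-1}\pmod{2^j}$ and $S'=I$. For $S=S_2$, pushing $S_2$ through $D_j^{-1}$ and then through $R_{2k\pi/2^j}$ gives $S_2\,B_{j,k}=D_j^{-1}R_{-2k\pi/2^j}S_2=B_{j,\,-k\bmod 2^j}\,S_2$, so $\sigma_j(k)\equiv -k\pmod{2^j}$ and $S'=S_2$. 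The fourth symmetry $\mathrm{diag}(-1,1)=R_\pi\,S_2$ is handled by composing the two preceding cases.

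The only bookkeeping check is that in each case $\sigma_j$ is indeed a bijection of $\{0,1,\dots,2^j-1\}$, which is immediate because both $k\mapsto k+2^{j-1}$ and $k\mapsto -k$ are invertible modulo $2^j$. There is no real obstacle beyond this; the lemma reflects the obvious symmetry of the rotation grid $\{2k\pi/2^j\}$ under the sign changes acting on the coordinate axes.
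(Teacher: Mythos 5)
Your proof is correct and follows essentially the same route as the paper: exploit that the symmetries commute with the parabolic dilations $D_j$ and conjugate rotations to their inverses, so that each $S$ merely permutes the rotation grid $\{2k\pi/2^j\}_{k}$ at every scale, and then absorb the remaining symmetry into the essential supremum by a measure-preserving change of variables. The only cosmetic difference is that the paper dispatches $S=\pm I$ instantly by noting that $-I$ commutes with every $B_{j,k}$, whereas you reindex by $k\mapsto k+2^{j-1}$; both are valid.
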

\begin{proof}
If $S(\xi_1,\xi_2)=(\xi_1,\xi_2)$ or $S(\xi_1,\xi_2)=(-\xi_1,-\xi_2)$, then $S$ commutes with $B_{j,k}$ and the
conclusion is trivial.
If $S(\xi_1,\xi_2)=(-\xi_1,\xi_2)$ or $S(\xi_1,\xi_2)=(\xi_1,-\xi_2)$, then $S$ commutes with $D_j$ and is related to
the rotation of angle
$\theta$ by: $S R_\theta = R_{-\theta} S$. Hence,
\begin{align*}
\norm{f \circ S}_*
&= \supess_{\xi \in \Rtst} \sum_{j \geq 1} \sum_{k=0}^{2^j-1} \abs{f(S D_j^{-1} R_{\Argjk}^{-1} \xi)}
= \supess_{\xi \in \Rtst} \sum_{j \geq 1} \sum_{k=0}^{2^j-1} \abs{f(D_j^{-1} R_{-\Argjk}^{-1} S \xi)}
\\
&= \supess_{\xi \in \Rtst} \sum_{j \geq 1} \sum_{k=0}^{2^j-1} \abs{f(D_j^{-1} R_{-\Argjk}^{-1} \xi)}
= \norm{f}_*,
\end{align*}
where the last equality follows from the fact that $(R_{\Argjk})^{2^j}=I$.
\end{proof}
Finally we derive the main technical time-scale correlation estimate.
\begin{prop}
\label{prop_scale_correlation}
Assume that $f: \Rtst \to \bC$ satisfies
\begin{align}
\label{eq_prop_decay_window}
\abs{f(\xi)} \leq \abs{\xi_1}^{2+\varepsilon} e^{-\deltaover\abs{\xi}}
\end{align} 
for some $\varepsilon, \deltaover>0$. Then
\begin{align*}
\norm{f}_*
= \supess_{\xi \in \Rtst} \sum_{j \geq 1} \sum_{k=0}^{2^j-1} \abs{f(B_{j,k}\xi)}
\lesssim 1.
\end{align*}
Here, the implicit constant depends on $\deltaover$ and $\varepsilon$.
\end{prop}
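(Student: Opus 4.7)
The plan is to envelope $|f|$ pointwise by a superposition of indicator functions of the sectors $V_{s,t}$ (and their three reflected copies), apply Lemma~\ref{lemma_Vst} term by term, and sum the resulting separable double series.

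First I would exploit Lemma~\ref{lemma_sym}. Let $\mathcal{S}$ denote the group of four sign-change symmetries $(\xi_1,\xi_2)\mapsto(\pm\xi_1,\pm\xi_2)$. The envelope $|\xi_1|^{2+\varepsilon}e^{-\deltaover |\xi|}$ is $\mathcal{S}$-invariant, and the family $\{SV_{s,t}:S\in\mathcal{S},\,s\in\Zst,\,t\geq 0\}$ covers $\Rtst\setminus\{0\}$ up to a null set. On $V_{s,t}$ one has $|\xi|\in[2^s,2^{s+1}]$ and $|\xi_1|=r\cos\theta\leq 2^{s+1}\cdot 2^{-t}=2^{s-t+1}$, and by $\mathcal{S}$-invariance the same bound passes to each reflected copy $SV_{s,t}$. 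Hence
\begin{align*}
|f(\xi)|\leq \sum_{S\in\mathcal{S}}\sum_{s\in\Zst}\sum_{t\geq 0} c_{s,t}\,1_{SV_{s,t}}(\xi),\qquad c_{s,t}:=2^{(s-t+1)(2+\varepsilon)}\,e^{-\deltaover\, 2^s}.
\end{align*}

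Next, sublinearity of the star-norm combined with Lemma~\ref{lemma_sym} (which gives $\norm{1_{SV_{s,t}}}_*=\norm{1_{V_{s,t}}}_*$) and Lemma~\ref{lemma_Vst} ($\norm{1_{V_{s,t}}}_*\lesssim 4^t$) yields
\begin{align*}
\norm{f}_*\lesssim \sum_{s\in\Zst}\sum_{t\geq 0} c_{s,t}\,4^t
=\Bigl(\sum_{s\in\Zst} 2^{(s+1)(2+\varepsilon)}\,e^{-\deltaover\, 2^s}\Bigr)\Bigl(\sum_{t\geq 0} 2^{-t\varepsilon}\Bigr),
\end{align*}
where I used $4^t\cdot 2^{-t(2+\varepsilon)}=2^{-t\varepsilon}$. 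Both factors are finite: the $t$-sum is a convergent geometric series because $\varepsilon>0$, and in the $s$-sum the prefactor $2^{(s+1)(2+\varepsilon)}$ decays geometrically as $s\to-\infty$ (positive exponent) while $e^{-\deltaover\, 2^s}$ beats its growth as $s\to+\infty$.

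The only delicate point is the balance in the $t$-sum: the $4^t=2^{2t}$ factor from Lemma~\ref{lemma_Vst} -- produced by the angular overlap of rotated parabolic dilates of a thin sector hugging the $\xi_2$-axis -- is matched almost exactly by the $|\xi_1|^{2+\varepsilon}\sim 2^{(s-t)(2+\varepsilon)}$ decay from \eqref{eq_prop_decay_window}, leaving a margin of $2^{-t\varepsilon}$ that converges only thanks to the strict inequality $\varepsilon>0$. This is exactly why the moment assumption $\moment>2$ in \eqref{eq_assum_decay_window_2_plus} is sharp: it furnishes the required surplus $\varepsilon=\moment-2>0$.
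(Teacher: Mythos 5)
Your proposal is correct and follows essentially the same route as the paper's proof: envelope $|f|$ by indicators of the sectors $V_{s,t}$ and their reflections, invoke Lemma~\ref{lemma_sym} and Lemma~\ref{lemma_Vst}, and sum the resulting separable series, with the $4^t$ overlap factor beaten by the $2^{-t(2+\varepsilon)}$ decay exactly as in the paper (where $r=2^{2+\varepsilon}$ and $4/r=2^{-\varepsilon}$). Your closing remark on the sharpness of $\moment>2$ is a nice observation but not needed for the argument.
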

\begin{rem}
\label{rem_star_norm_finite}
\rm Since the window $\varphi$ generating the wavepackets satisfies \eqref{eq_assum_decay_window_2_plus},
it follows that $\norm{\widehat{\varphi}}_* < +\infty$.
\end{rem}
\begin{proof}[Proof of Proposition \ref{prop_scale_correlation}]
Recall the definition of the sets $V_{s,t}$ in \eqref{eq_deff_Vst} and further define
$V_{s,t}^i:=S^i V_{s,t}$, $i=0,1,2,3$, where $S^i$ are the four symmetries
$(\xi_1,\xi_2) \mapsto (\pm \xi_1, \pm \xi_2)$. From Lemma \ref{lemma_sym},
$\norm{1_{V_{s,t}^i}}_*=\norm{1_{V_{s,t}}}_*$ and hence, by Lemma \ref{lemma_Vst},
$\norm{1_{V_{s,t}^i}}_* \lesssim 4^t$. Since the sets
$\sett{V_{s,t}^i: s \in \Zst, t \in \Nst, i=0,1,2,3}$ cover $\Rtst
\setminus \sett{(\xi_1,\xi_2):\xi_1=0}$ and $f(\xi)=0$ if $\xi_1=0$, we 
have the pointwise estimate
\begin{align*}
\abs{f(\zeta)} \leq \sum_{i=0}^3 \sum_{s \in \Zst} \sum_{t \geq 0}
\sup_{\xi\in V_{s,t}^i} \abs{f(\xi)} 1_{V_{s,t}^i}(\zeta),
\qquad \zeta \in \Rtst,
\end{align*}
and therefore,
\begin{align*}
\norm{f}_* \lesssim \sum_{i=0}^3 \sum_{s \in \Zst}
\sum_{t \geq 0} 4^t \sup_{\xi\in V_{s,t}^i} \abs{f(\xi)}.
\end{align*}
Using \eqref{eq_prop_decay_window} and writing $r := 2^{2+\varepsilon}$ we 
see that
\begin{align}
\sup_{\xi\in V_{s,t}^i} \abs{f(\xi)}
= \sup_{\xi\in V_{s,t}} \abs{f(\xi)}
\lesssim
(2^{s-t+1})^{2+\varepsilon} e^{-\deltaover 2^s}=(1/r)^t r^{s+1} e^{-\deltaover 2^s}.
\end{align}
Since $r>4$,
\begin{align*}
\norm{f}_* \lesssim \sum_{t \geq 0} (4/r)^t \cdot 
\sum_{s \in \Zst} r^{s+1} e^{-\deltaover 2^s} < +\infty.
\end{align*}
\end{proof}

\section{Frame expansions}
\label{sec_frame}
\subsection{Representation of the frame operator}
\label{sec_dec}
Let $\frameop$ be the frame operator associated with the set of wavepackets
$\curvsystem$, given by \eqref{eq_curv_system}
\begin{align*}
\frameop f :=
\sum_{\lambda \in \Lambda}
\ip{f}{T_\lambda \winzero} T_\lambda \winzero
+
\sum_{\lambda \in \Lambda} \sum_{j=1}^{+\infty} \sum_{k=0}^{2^j-1}
\ip{f}{\dilAjk T_\lambda \varphi} \dilAjk T_\lambda \varphi.
\end{align*}

Recall that $\varphi $ and $\varphi _0$ satisfy the hypotheses
\eqref{eq_assum_cov} --  \eqref{eq_assum_decay_window_2_plus}   and
that $\Gamma$ denotes the dual lattice of 
$\Lambda$, precisely, if $\Lambda = P \Ztst$, with $P \in \Rst^{2\times 2}$ 
invertible, then $\Gamma=(P^{-1})^* \Ztst$. 
Using Poisson's summation formula
\begin{align}
\label{eq_psf}
{\volLambda}^{-1}
\sum_{\gamma \in \Gamma} f(x-\gamma)
= \sum_{\lambda \in \Lambda} \widehat{f}(\lambda) e^{2\pi i x \lambda},
\quad (f \in \mathcal{S}(\Rtst)),
\end{align}
we derive a simple description of the action of $\frameop$ in the 
frequency domain that leads to an analogue of
Daubechies' criterion. We now describe this explicitly.

Let us denote $\hatframeop:=\mathcal{F} \frameop \mathcal{F}^{-1}$; i.e., $\hatframeop \widehat{f}
:= \widehat{(\frameop f)}$. Using Poisson's summation formula, $\hatframeop$ can be decomposed in the following way.
\begin{lemma}
\label{lemma_hatframeop}
For $\gamma \in \Gamma$, consider the operator $\hatframeopg: \LtRt \to \LtRt$ defined by
\begin{align}
\label{eq_hatframeopg}
\hatframeopg f (\xi) = f(\xi-\gamma) 
\widehat{\winzero}(\xi) \overline{\widehat{\winzero}(\xi-\gamma)}
+
\sum_{j=1}^{+\infty} \sum_{k=0}^{2^j-1}
f(\xi-A_{j,k}^* \gamma)
\widehat{\varphi}(B_{j,k}\xi) \overline{\widehat{\varphi}(B_{j,k}\xi-\gamma)}.
\end{align}
Then the operator $\hatframeop$ can be written as
\begin{align*}
\hatframeop f = {\volLambda}^{-1} \sum_{\gamma \in \Gamma} \hatframeopg f.
\end{align*}
\end{lemma}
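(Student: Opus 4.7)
The plan is to compute $\hatframeop \hat f(\xi) = \widehat{\frameop f}(\xi)$ by transferring the sum over $\lambda \in \Lambda$ in the frame operator into a sum over $\gamma \in \Gamma$ via Poisson's formula \eqref{eq_psf}, separately for the coarse term and for each wavepacket term, and then interchanging the resulting sums.

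First I would pass to the frequency side. Using $\widehat{T_\lambda \winzero}(\xi)= e^{-2\pi i\lambda\cdot\xi}\widehat{\winzero}(\xi)$ and, since $B_{j,k}=(A_{j,k}^*)^{-1}$ and therefore $\lambda\cdot B_{j,k}\xi = A_{j,k}^{-1}\lambda\cdot\xi$,
\begin{align*}
\widehat{\dilAjk T_\lambda \varphi}(\xi) = 8^{-j/2}\, e^{-2\pi i A_{j,k}^{-1}\lambda\cdot\xi}\,\widehat{\varphi}(B_{j,k}\xi),
\end{align*}
I would apply Plancherel to every inner product and collect, for each fixed $(j,k)$, the sum over $\lambda\in\Lambda$ into a single object of the form $\sum_{\lambda}\widehat{h}(-A_{j,k}^{-1}\lambda)\,e^{-2\pi i A_{j,k}^{-1}\lambda\cdot\xi}$ with $h(\eta):=\hat f(\eta)\,\overline{\widehat{\varphi}(B_{j,k}\eta)}$ (and analogously, with $A_{j,k}=I$ and $\varphi$ replaced by $\winzero$, for the coarse-scale term).

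Next I would apply Poisson summation. For the coarse term, \eqref{eq_psf} applied to the lattice $\Lambda$ (using $\Lambda=-\Lambda$) yields $\volLambda^{-1}\widehat{\winzero}(\xi)\sum_{\gamma\in\Gamma}\hat f(\xi-\gamma)\overline{\widehat{\winzero}(\xi-\gamma)}$. For the scale $j$, angle $k$ term, the relevant lattice is $A_{j,k}^{-1}\Lambda$, whose dual is $A_{j,k}^*\Gamma$ and whose covolume is $8^{-j}\volLambda$; the factor $8^{-j}$ cancels the dilation prefactor, and after reindexing $\gamma'=A_{j,k}^*\gamma$, $\gamma\in\Gamma$, the identity $B_{j,k}A_{j,k}^*=I$ simplifies $\widehat{\varphi}(B_{j,k}(\xi-A_{j,k}^*\gamma))=\widehat{\varphi}(B_{j,k}\xi-\gamma)$. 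The resulting summand is exactly the $(j,k,\gamma)$ term of \eqref{eq_hatframeopg}. Swapping the order of $\sum_{j,k}$ and $\sum_{\gamma}$ and recognizing $\hatframeopg$ then produces the claimed identity.

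The main obstacle is justifying Poisson's formula and the interchange of summations, since \eqref{eq_psf} is stated for Schwartz functions while we want the identity on $\LtRt$. I would bypass this by first proving the identity on the dense subspace $\mathcal{S}(\Rtst)$, where everything converges absolutely: $\widehat{\winzero}$ and $\widehat{\varphi}$ are Schwartz by \eqref{eq_assum_decay_window_1_plus}--\eqref{eq_assum_decay_window_2_plus}, so the functions $h_{j,k}:=\hat f\cdot\overline{\widehat{\varphi}(B_{j,k}\cdot)}$ are Schwartz and Poisson applies to each. The absolute convergence of $\sum_{j,k,\gamma}$ of their periodizations on Schwartz inputs can be controlled by Proposition~\ref{prop_scale_correlation} combined with the rapid decay of $\hat f$, thereby licensing the interchange. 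Finally, both the operator $\hatframeop$ and the formal sum $\volLambda^{-1}\sum_\gamma\hatframeopg$ extend to bounded operators on $\LtRt$ (boundedness of $\frameop$ being a byproduct of Daubechies' criterion established in the next sections, and of the target expression by Cauchy--Schwarz with Proposition~\ref{prop_scale_correlation}), so the identity extends by continuity from $\mathcal{S}$ to all of $\LtRt$.
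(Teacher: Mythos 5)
Your proposal is correct and follows essentially the same route as the paper: conjugate the frame operator by the Fourier transform and convert each sum over $\Lambda$ into a sum over the dual lattice $\Gamma$ via Poisson summation \eqref{eq_psf}. The only cosmetic difference is that the paper performs the Poisson computation once for the undilated operator $S_* f=\sum_{\lambda}\ip{f}{T_\lambda\varphi}T_\lambda\varphi$ and transports it to each scale via the conjugation $S_{j,k}=\dilAjk S_*\dilAjk^{-1}$, whereas you apply Poisson directly to each dilated lattice $A_{j,k}^{-1}\Lambda$; your added care about density and extension by continuity goes beyond what the paper records but does not change the method.
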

\begin{proof}
Let us consider the operators,
\begin{align*}
S_0 f &:= \sum_{\lambda \in \Lambda} \ip{f}{T_\lambda \winzero} T_\lambda \winzero,
\\
S_* f &:= \sum_{\lambda \in \Lambda} \ip{f}{T_\lambda \varphi} T_\lambda \varphi,
\\
S_{j,k} f
&:= \sum_{\lambda \in \Lambda} \ip{f}{\dilAjk T_\lambda \varphi} \dilAjk T_\lambda \varphi,
\quad (j \geq 1, k=0, \ldots, 2^j-1).
\end{align*}
Hence, $\frameop = S_0 + \sum_{j,k} S_{j,k}$ and $S_{j,k} = \dilAjk S_* \dilAjk^{-1}$.

Let us compute
\begin{align*}
&\mathcal{F} S_0 \mathcal{F}^{-1} f (\xi)
=
\sum_{\lambda \in \Lambda} \ip{\mathcal{F}^{-1} f}{T_\lambda \winzero} e^{-2\pi i \lambda \xi} \widehat{\winzero}(\xi)
\\
&\qquad=
\sum_{\lambda \in \Lambda} \ip{f}{\widehat{\winzero}e^{-2\pi i \lambda \cdot}}
e^{-2\pi i \lambda \xi} \widehat{\winzero}(\xi)
=
\sum_{\lambda \in \Lambda}
\mathcal{F}(f \overline{\widehat{\winzero}})(\lambda) e^{2\pi i \lambda \xi} \widehat{\winzero}(\xi).
\end{align*}
Using \eqref{eq_psf}, this implies that
\begin{align}
\label{eq_lemmaS0}
\mathcal{F} S_0 \mathcal{F}^{-1} f (\xi)
=
\volLambda^{-1}
\sum_{\gamma \in \Gamma} f(\xi-\gamma) \overline{\widehat{\winzero}(\xi-\gamma)} \widehat{\winzero}(\xi).
\end{align}
Similarly,
\begin{align*}
&\mathcal{F} S_* \mathcal{F}^{-1} f (\xi)
=
\volLambda^{-1}
\sum_{\gamma \in \Gamma} f(\xi-\gamma) \overline{\widehat{\varphi}(\xi-\gamma)} \widehat{\varphi}(\xi).
\end{align*}
Noting that $\mathcal{F} S_{j,k} \mathcal{F}^{-1} = 
\dilAjkstar^{-1} \mathcal{F} S_* \mathcal{F}^{-1} \dilAjkstar$,  this yields
\begin{align}
\label{eq_lemmaSjk}
\mathcal{F} S_{j,k} \mathcal{F}^{-1} f(\xi)
=
\volLambda^{-1}
\sum_{\gamma \in \Gamma} f(\xi-A_{j,k}^* \gamma)
\overline{\widehat{\varphi}(B_{j,k}\xi-\gamma)} \widehat{\varphi}(B_{j,k}\xi).
\end{align}
Since $\frameop = S_0 + \sum_{j,k} S_{j,k}$, the conclusion follows from \eqref{eq_lemmaS0}
and \eqref{eq_lemmaSjk}.
\end{proof}
Following the representation in Lemma \ref{lemma_hatframeop}, we define
the time-scale correlation function,
\begin{align}
\label{eq_ts_corr_func}
\corr(\zeta) := \supess_{\xi \in \Rtst}
\left(
\abs{\widehat{\winzero}(\xi)} \abs{\widehat{\winzero}(\xi-\zeta)}+
\sum_{j=1}^{+\infty} \sum_{k=0}^{2^j-1}
\abs{\widehat{\varphi}(B_{j,k}\xi)} \abs{\widehat{\varphi}(B_{j,k}\xi-\zeta)}
\right),
\quad (\zeta \in \Rtst).
\end{align}
We now note that this function bounds the operators appearing in Lemma \ref{lemma_hatframeop}.
\begin{lemma}
\label{lemma_bound_hatframeopg}
The operators in \eqref{eq_hatframeopg} satisfy the bound,
\begin{align*}
\norm{\hatframeopg f}_2 \leq \max\{\corr(\gamma),\corr(-\gamma)\} \norm{f}_2,
\quad
(f \in \LtRt, \gamma \in \Gamma).
\end{align*}
\end{lemma}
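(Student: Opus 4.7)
The plan is to bound $\|\hatframeopg f\|_2$ by duality, testing against an arbitrary $g \in \LtRt$ and showing that
\[
|\ip{\hatframeopg f}{g}| \leq \bigl(\corr(\gamma)\corr(-\gamma)\bigr)^{1/2} \norm{f}_2 \norm{g}_2,
\]
from which the stated bound (with the maximum) is immediate.

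Writing $m_0(\xi,\gamma) := \widehat{\winzero}(\xi)\overline{\widehat{\winzero}(\xi-\gamma)}$ and $m_{j,k}(\xi,\gamma) := \widehat{\varphi}(B_{j,k}\xi)\overline{\widehat{\varphi}(B_{j,k}\xi-\gamma)}$, the triangle inequality gives
\[
|\ip{\hatframeopg f}{g}| \leq \int |m_0(\xi,\gamma)||f(\xi-\gamma)||g(\xi)|\,d\xi + \sum_{j,k}\int |m_{j,k}(\xi,\gamma)||f(\xi-A_{j,k}^*\gamma)||g(\xi)|\,d\xi.
\]
First I would apply Cauchy--Schwarz to each integrand, splitting $|m|\cdot|f(\cdot)|\cdot|g|$ as $(|m|^{1/2}|f(\cdot)|)\cdot(|m|^{1/2}|g|)$, and then apply Cauchy--Schwarz to the outer sum over $(j,k)$ (lumping the $j=0$ coarse term with them). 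This reduces the bound to controlling two quantities:
\[
I_1 := \int |m_0(\xi,\gamma)||g(\xi)|^2 d\xi + \sum_{j,k}\int|m_{j,k}(\xi,\gamma)||g(\xi)|^2 d\xi,
\]
\[
I_2 := \int |m_0(\xi,\gamma)||f(\xi-\gamma)|^2 d\xi + \sum_{j,k}\int |m_{j,k}(\xi,\gamma)||f(\xi-A_{j,k}^*\gamma)|^2 d\xi.
\]

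The quantity $I_1$ is straightforward: pulling the sum inside the integral and taking essential supremum in $\xi$ gives $I_1 \leq \corr(\gamma)\norm{g}_2^2$ directly from the definition \eqref{eq_ts_corr_func}.

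For $I_2$, the key step is the change of variables $\eta = \xi - A_{j,k}^*\gamma$ in each summand, combined with the identity $B_{j,k}A_{j,k}^* = (A_{j,k}^*)^{-1}A_{j,k}^* = I$. This gives
\[
|m_{j,k}(\eta + A_{j,k}^*\gamma,\gamma)| = |\widehat{\varphi}(B_{j,k}\eta + \gamma)||\widehat{\varphi}(B_{j,k}\eta)|,
\]
and similarly $|m_0(\eta+\gamma,\gamma)| = |\widehat{\winzero}(\eta+\gamma)||\widehat{\winzero}(\eta)|$. After swapping sum and integral, the resulting inner sum is exactly the expression defining $\corr(-\gamma)$, so $I_2 \leq \corr(-\gamma)\norm{f}_2^2$. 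Combining $I_1$ and $I_2$ closes the argument.

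The main potential pitfall is keeping the bookkeeping straight: because the translation in $f$ and the translation in $\widehat{\varphi}(B_{j,k}\xi-\gamma)$ point in opposite directions, one naturally picks up $\corr(-\gamma)$ on the $f$ side and $\corr(\gamma)$ on the $g$ side; taking the maximum at the end absorbs this asymmetry. The only other point requiring care is verifying that the coarse-scale term ($j=0$, a single summand) is treated on equal footing with the $(j,k)$-sum in both the Cauchy--Schwarz step and in the final identification with $\corr(\pm\gamma)$, but this is automatic from the definition \eqref{eq_ts_corr_func}.
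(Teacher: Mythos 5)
Your proof is correct, and it takes a different route from the paper's. The paper disposes of this lemma in two lines: it observes the endpoint bounds $\norm{\hatframeopg f}_1 \leq \corr(-\gamma)\norm{f}_1$ and $\norm{\hatframeopg f}_\infty \leq \corr(\gamma)\norm{f}_\infty$ (the first by the same change of variables $\eta = \xi - A_{j,k}^*\gamma$ together with $B_{j,k}A_{j,k}^* = I$ that you use for $I_2$, the second by pulling out $\norm{f}_\infty$ as you do for $I_1$) and then invokes Riesz--Thorin interpolation. Your argument is the hands-on Schur-test version of the same idea: duality plus a single Cauchy--Schwarz over the combined measure space (the index set $\{0\}\cup\{(j,k)\}$ times $\Rtst$), with the two resulting quantities $I_1$ and $I_2$ controlled exactly by the two endpoint computations. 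What your route buys is self-containedness --- no appeal to interpolation theory --- at the cost of slightly more bookkeeping; what the paper's route buys is brevity. Both in fact yield the sharper bound $\bigl(\corr(\gamma)\corr(-\gamma)\bigr)^{1/2}\norm{f}_2$, which the paper records separately in the remark following Theorem \ref{th_crit} as a refinement of $\Delta(\Lambda)$; your write-up makes that refinement explicit from the start. Your bookkeeping of the sign (picking up $\corr(-\gamma)$ on the $f$ side and $\corr(\gamma)$ on the $g$ side) matches the paper's, and the coarse-scale term is indeed handled uniformly by the definition \eqref{eq_ts_corr_func}.
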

\begin{proof}
It is straightforward to see that
$\norm{\hatframeopg f}_1 \leq \corr(-\gamma) \norm{f}_1$ and
$\norm{\hatframeopg f}_\infty \leq \corr(\gamma) \norm{f}_\infty$. Hence the conclusion
follows by interpolation.
\end{proof}
\subsection{A Daubechies-like frame criterion}
We now get the following analogue of Daubechies' criterion for wavelets.
\begin{theorem}
\label{th_crit}
Let
\begin{align}
\label{eq_deff_Delta}
\Delta(\Lambda) := \sum_{\gamma \in \Gamma \setminus \sett{0}} \max\{\corr(\gamma),\corr(-\gamma)\}.
\end{align}
If $\Delta(\Lambda) < A$, then the set of wavepackets
\begin{align*}
\curvsystem =  \set{\winzerol}{\lambda \in \Lambda}
\cup \set{\varphi_{j,k,\lambda}}{\lambda \in \Lambda, j \geq 1, 0 \leq k \leq 2^j-1}
\end{align*}
is a frame of $\LtRt$ with bounds
$\volLambda^{-1} (A - \Delta(\Lambda))$ and $\volLambda^{-1} (B + \Delta(\Lambda))$.
\end{theorem}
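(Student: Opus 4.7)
The plan is to follow the classical Daubechies strategy for wavelet frames: view $\hatframeop$ as a \emph{principal} diagonal Fourier multiplier (the $\gamma=0$ term of the decomposition provided by Lemma \ref{lemma_hatframeop}) plus a sum of off-diagonal perturbations indexed by $\gamma\in\Gamma\setminus\sett{0}$, and control each piece in operator norm. Since $\mathcal{F}$ is unitary, it suffices to work on the Fourier side, establishing
\begin{align*}
\volLambda^{-1}(A-\Delta(\Lambda))\,\norm{f}_2^2
\;\leq\; \ip{\hatframeop f}{f} \;\leq\;
\volLambda^{-1}(B+\Delta(\Lambda))\,\norm{f}_2^2,
\end{align*}
which is equivalent to the desired frame inequality in view of the identity $\ip{\frameop f}{f}=\sum_{g\in\curvsystem}\abs{\ip{f}{g}}^2$.

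The first step is to isolate the diagonal term. Setting $\gamma=0$ in \eqref{eq_hatframeopg} gives
\begin{align*}
\hatframeopzero f(\xi) = f(\xi)\left[\,\abs{\widehat{\winzero}(\xi)}^2 + \sum_{j\geq 1}\sum_{k=0}^{2^j-1}\abs{\widehat{\varphi}(B_{j,k}\xi)}^2\,\right],
\end{align*}
which is the multiplication by the symbol appearing in the covering hypothesis \eqref{eq_assum_cov}. Consequently $\hatframeopzero$ is a bounded positive Fourier multiplier with
\begin{align*}
A\,\norm{f}_2^2 \;\leq\; \ip{\hatframeopzero f}{f} \;\leq\; B\,\norm{f}_2^2.
\end{align*}

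For the perturbative part, Lemma \ref{lemma_bound_hatframeopg} supplies the operator bound $\norm{\hatframeopg f}_2\leq\max\{\corr(\gamma),\corr(-\gamma)\}\norm{f}_2$. Summing over $\gamma\in\Gamma\setminus\sett{0}$ and applying the Cauchy--Schwarz inequality,
\begin{align*}
\Bigl| \sum_{\gamma\in\Gamma\setminus\sett{0}}\ip{\hatframeopg f}{f}\Bigr|
\;\leq\; \sum_{\gamma\in\Gamma\setminus\sett{0}}\norm{\hatframeopg f}_2\norm{f}_2
\;\leq\; \Delta(\Lambda)\,\norm{f}_2^2.
\end{align*}
The hypothesis $\Delta(\Lambda)<A<\infty$ guarantees absolute convergence of the series, so the decomposition of $\hatframeop$ in Lemma \ref{lemma_hatframeop} converges weakly and defines a bounded operator. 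Combining the two bounds yields the claimed sandwich, and dividing by $\volLambda$ produces the frame bounds $\volLambda^{-1}(A-\Delta(\Lambda))$ and $\volLambda^{-1}(B+\Delta(\Lambda))$; positivity of the lower bound is precisely the assumption $\Delta(\Lambda)<A$.

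There is no serious obstacle: the hard technical content has already been stowed in Lemmas \ref{lemma_hatframeop} and \ref{lemma_bound_hatframeopg}, and Theorem \ref{th_crit} is essentially their synthesis. The only point to handle with a modicum of care is justifying that the partial sums $\sum_{\gamma\in F}\hatframeopg$ over finite $F\subseteq\Gamma$ converge to $\volLambda\,\hatframeop$ in the strong (or weak) operator topology; the bound from Lemma \ref{lemma_bound_hatframeopg} together with the summability $\Delta(\Lambda)<\infty$ makes the series Cauchy in operator norm, which settles this automatically.
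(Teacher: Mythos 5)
Your proof is correct and follows essentially the same route as the paper: the decomposition of Lemma \ref{lemma_hatframeop}, the covering condition \eqref{eq_assum_cov} for the $\gamma=0$ term, and Lemma \ref{lemma_bound_hatframeopg} summed over $\gamma\in\Gamma\setminus\sett{0}$. The only (cosmetic) difference is that you bound the quadratic form $\ip{\hatframeop f}{f}$ directly, which yields the frame inequality immediately, whereas the paper bounds $\norm{\hatframeop f}_2$ and then passes to the frame bounds using the positivity and self-adjointness of $\frameop$.
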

\begin{proof}
Recall the decomposition
$\hatframeop f = \volLambda^{-1} \sum_{\gamma \in \Gamma} \hatframeopg f$
in Lemma \ref{lemma_hatframeop} and note that
\begin{align*}
\hatframeopzero f(\xi) = f(\xi)
\left(
\abs{\widehat{\winzero}(\xi)}^2 + \sum_{j=1}^{+\infty} \sum_{k=0}^{2^j-1}
\abs{\widehat{\varphi}(B_{j,k}\xi)}^2
\right).
\end{align*}
The covering condition in \eqref{eq_assum_cov} implies that
\begin{align*}
A \abs{f(\xi)} \leq \abs{\hatframeopzero f(\xi)} \leq B \abs{f(\xi)},
\qquad (f \in \LtRt)
\end{align*}
and therefore
\begin{align}
\label{eq_AB}
A \norm{f}_2 \leq \norm{\hatframeopzero f}_2 \leq B \norm{f}_2,
\qquad (f \in \LtRt).
\end{align}
Using Lemma \ref{lemma_bound_hatframeopg} we get
\begin{align*}
&\norm{\volLambda\hatframeop f - \hatframeopzero f}_2 \leq
\sum_{\gamma \in \Gamma \setminus \sett{0}} \norm{\hatframeopg f}_2 
\\ 
&\qquad \leq \sum_{\gamma \in \Gamma \setminus \sett{0}}
\max\{\corr(\gamma),\corr(-\gamma)\} \norm{f}_2
= \Delta(\Lambda) \norm{f}_2.
\end{align*}
This together with \eqref{eq_AB} this implies that for all $f \in \LtRt$
\begin{align*}
\volLambda^{-1} (A - \Delta(\Lambda)) \norm{f}_2
\leq \norm{\hatframeop f}_2
\leq \volLambda^{-1} (B + \Delta(\Lambda)) \norm{f}_2.
\end{align*}
Applying the last estimate to $\hat{f}$ and using Plancherel's theorem we conclude
that for all $f \in \LtRt$
\begin{align*}
\volLambda^{-1} (A - \Delta(\Lambda)) \norm{f}_2
\leq \norm{\frameop f}_2
\leq \volLambda^{-1} (B + \Delta(\Lambda)) \norm{f}_2,
\end{align*}
as desired.
\end{proof}
\begin{rem} \rm 
The criterion remains true if $\Delta(\Lambda)$ is replaced by the following smaller quantity,
cf. Lemma \ref{lemma_bound_hatframeopg},
\[\sum_{\gamma\not=0} \corr(\gamma)^{1/2}
\corr(-\gamma)^{1/2}.\] 
\end{rem}
\subsection{Asymptotics for the sampling density and the error}
\label{sec_asym}
Using the rotation-dilation estimates from Section \ref{sec_overlaps} we can give asymptotics for the frame criterion
of Theorem \ref{th_crit}. Let the lattice $\Lambda$ be given by $\Lambda=P \Ztst$, with $P$ having singular values
$a,b \geq 0$.
This means that $\Lambda$ is a unitary image of the rectangular lattice 
$a\Zst \times b\Zst$. Motivated by this, 
we call $a,b$ the \emph{lattice parameters} of $\Lambda$. We now estimate the sampling density and reconstruction error
in terms of $a$ and $b$.
\begin{lemma}
\label{lemma_asym}
Under assumptions \eqref{eq_assum_cov}, \eqref{eq_assum_decay_window_1_plus}, \eqref{eq_assum_decay_window_2_plus},
the quantity $\Delta(\Lambda)$ in \eqref{eq_deff_Delta} satisfies the estimate
\begin{align*}
\Delta(\Lambda) \lesssim \asympt,
\end{align*}
for some $0<\newdelta<\delta$ and $0< a,b \leq 1$.
\end{lemma}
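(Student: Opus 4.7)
The plan is to prove a pointwise decay bound $\corr(\gamma) \lesssim e^{-\newdelta |\gamma|^2}$ for some $\newdelta \in (0, \delta)$, and then estimate the resulting sum over the dual lattice $\Gamma$.

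For the pointwise bound, I would exploit the polarization identity $|\xi|^2 + |\xi - \gamma|^2 = 2|\xi - \gamma/2|^2 + \tfrac{1}{2}|\gamma|^2$ to peel off a $\gamma$-dependent factor: for any $0 < \delta' < \delta$,
\begin{align*}
e^{-\delta(|\xi|^2 + |\xi - \gamma|^2)}
\leq e^{-(\delta - \delta')|\gamma|^2/2}\, e^{-\delta'(|\xi|^2 + |\xi - \gamma|^2)}.
\end{align*}
The coarse-scale piece $\abs{\widehat{\winzero}(\xi)}\abs{\widehat{\winzero}(\xi - \gamma)}$ is thus bounded by $e^{-\delta|\gamma|^2/2}$ directly from \eqref{eq_assum_decay_window_1_plus}. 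For the wavepacket sum, combining \eqref{eq_assum_decay_window_2_plus} with the split above and trivially bounding one of the $\min$-factors by $1$ yields
\begin{align*}
\abs{\widehat{\varphi}(B_{j,k}\xi)}\,\abs{\widehat{\varphi}(B_{j,k}\xi - \gamma)}
\lesssim e^{-(\delta-\delta')|\gamma|^2/2}\, \tilde h(B_{j,k}\xi),
\end{align*}
where $\tilde h(\eta) := \min(1, \abs{\eta_1}^{\moment})\, e^{-\delta'|\eta|^2}$.

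The key observation is that $\tilde h$ fits the hypothesis of Proposition \ref{prop_scale_correlation}. Writing $\varepsilon := \moment - 2 > 0$, one has $\min(1, \abs{\eta_1}^{\moment}) \leq \abs{\eta_1}^{2 + \varepsilon}$ for all $\eta_1$, and for any fixed $\tau' > 0$ completing the square gives $e^{-\delta'|\eta|^2} \lesssim e^{-\tau'|\eta|}$ uniformly in $\eta$. Hence $\tilde h(\eta) \lesssim \abs{\eta_1}^{2+\varepsilon} e^{-\tau'|\eta|}$, so Proposition \ref{prop_scale_correlation} gives $\|\tilde h\|_* \lesssim 1$, and the wavepacket sum is bounded by $e^{-(\delta - \delta')|\gamma|^2/2}$, uniformly in $\xi$. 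Choosing $\newdelta := (\delta - \delta')/2 \in (0, \delta)$ and combining with the coarse-scale estimate yields $\max\{\corr(\gamma), \corr(-\gamma)\} \lesssim e^{-\newdelta|\gamma|^2}$.

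For the lattice sum, since $\Gamma$ is a unitary image of $a^{-1}\Zst \times b^{-1}\Zst$, the norms in $\Gamma$ take the form $|\gamma|^2 = m^2/a^2 + n^2/b^2$ with $(m,n) \in \Zst^2$. Therefore
\begin{align*}
\Delta(\Lambda) \lesssim \sum_{(m,n) \neq (0,0)} e^{-\newdelta m^2/a^2}\, e^{-\newdelta n^2/b^2}.
\end{align*}
Decomposing this sum according to whether $m = 0$, $n = 0$, or both are nonzero, and using $0 < a, b \leq 1$ together with $m^2 \geq m$ for $m \geq 1$, each one-dimensional tail is controlled by $\sum_{m \geq 1} e^{-\newdelta m^2/a^2} \leq \sum_{m \geq 1} e^{-\newdelta m / a^2} \lesssim e^{-\newdelta/a^2}$, and the mixed contribution $(\sum_{m \geq 1} \cdots)(\sum_{n \geq 1} \cdots)$ is dominated by either of the one-dimensional ones. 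This gives $\asympt$. The main obstacle is the first step: trading Gaussian decay for the exponential decay that Proposition \ref{prop_scale_correlation} requires, while simultaneously extracting a $\gamma$-dependent factor strong enough to survive the lattice summation. The condition $\moment > 2$ is exactly what allows the factor $\min(1, \abs{\eta_1}^{\moment})$ to be absorbed into the power $\abs{\eta_1}^{2+\varepsilon}$ of that proposition.
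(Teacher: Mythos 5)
Your proposal is correct and follows essentially the same route as the paper: extract a Gaussian factor in $\gamma$ from the product of window decays (the paper uses $\abs{\xi}^2+\abs{\xi-\zeta}^2\gtrsim\max\{\abs{\xi}^2,\abs{\zeta}^2\}$ where you use the polarization identity, which is equivalent), reduce the remaining $j,k$-sum to the star-norm of a function satisfying the hypotheses of Proposition~\ref{prop_scale_correlation}, and then bound the dual-lattice sum by separable one-dimensional geometric-type series using $m^2\geq m$ and $a,b\leq 1$. The only cosmetic differences are that the paper drops the $\min(1,\cdot)$ factor outright and sums the full product series before subtracting $1$, rather than splitting into the cases $m=0$, $n=0$, and both nonzero.
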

\begin{proof}
Let $\xi,\zeta \in \Rtst$. We use repeatedly the quasi triangle inequality:
$\abs{\xi+\zeta}^2 \leq 2(\abs{\xi}^2+\abs{\zeta}^2)$ and use
$\newdelta$ to denote new (smaller) decay parameters that can vary from line to line.
By \eqref{eq_assum_decay_window_1_plus},
\begin{align*}
\abs{\widehat{\winzero }(\xi)}\abs{\widehat{\winzero }(\xi-\zeta)}
\lesssim e^{-\delta(\abs{\xi}^2+\abs{\xi-\zeta}^2)} \leq e^{-\newdelta\abs{\zeta}^2}.
\end{align*}
Likewise, by \eqref{eq_assum_decay_window_2_plus},
\begin{align*}
\abs{\widehat{\varphi }(\xi)}\abs{\widehat{\varphi }(\xi-\zeta)}
\lesssim \abs{\xi_1}^{\moment}
e^{-\delta(\abs{\xi}^2+\abs{\xi-\zeta}^2)}.
\end{align*}
Since $\abs{\xi}^2+\abs{\xi-\zeta}^2 \gtrsim \max\{\abs{\xi}^2, \abs{\zeta}^2\}$, we have that
$\abs{\xi}^2+\abs{\xi-\zeta}^2 \gtrsim \abs{\xi}^2+\abs{\zeta}^2$. Hence,
\begin{align*}
\abs{\widehat{\varphi }(\xi)}\abs{\widehat{\varphi }(\xi-\zeta)}
\lesssim \abs{\xi_1}^{\moment} e^{-\newdelta \abs{\xi}^2} e^{-\newdelta \abs{\zeta}^2}
= \Phi(\xi) e^{-\newdelta \abs{\zeta}^2},
\end{align*}
if we let $\Phi(\xi) := \abs{\xi_1}^{\moment} e^{-\newdelta \abs{\xi}^2}$.
This implies that the time-scale correlation function in \eqref{eq_ts_corr_func} satisfies
\begin{align*}
\corr(\zeta) &\lesssim
e^{-\delta\abs{\zeta}^2} +
\supess_{\xi\in\Rtst}
\sum_{j=1}^{+\infty} \sum_{k=0}^{2^j-1}
\abs{\widehat{\varphi}(B_{j,k}\xi)} \abs{\widehat{\varphi}(B_{j,k}\xi-\zeta)}
\\
&\leq
e^{-\newdelta\abs{\zeta}^2} +
e^{-\newdelta\abs{\zeta}^2}
\supess_{\xi\in\Rtst}
\sum_{j=1}^{+\infty} \sum_{k=0}^{2^j-1}
\Phi (B_{j,k}\xi)
\\
&= e^{-\newdelta\abs{\zeta}^2}(1+\norm{\Phi}_*).
\end{align*}
Using Proposition \ref{prop_scale_correlation} we conclude that
\begin{align}
\label{eq_bound_corr}
\corr(\zeta) \lesssim e^{-\newdelta\abs{\zeta}^2}.
\end{align}
Recall that $\Lambda=P\Ztst$ and let us write $P=UD$ with $D=diag(a,b)$, $U$ unitary and $a,b \geq 0$.
Then $\Gamma=UD^{-1} \Ztst$ and we can estimate
\begin{align*}
\Delta(\Lambda)&\lesssim
\sum_{(k,j) \in \Ztst \setminus\sett{0}} e^{-\newdelta\abs{U D^{-1}(k,j)}^2}
\quad=
\sum_{(k,j) \in \Ztst \setminus\sett{0}} e^{-\newdelta\abs{D^{-1}(k,j)}^2}
\\
&\quad \leq
\sum_{(k,j) \in \Ztst \setminus\sett{0}} e^{-\newdelta k^2/a^2} e^{-\newdelta j^2/b^2}
\lesssim
\sum_{(k,j) \in \Nst^2 \setminus\sett{0}} e^{-\newdelta k/a^2} e^{-\newdelta j/b^2}.
\end{align*}
Summing the geometric series gives
\begin{align*}
\Delta(\Lambda) & \lesssim (1-e^{-\newdelta/a^2})^{-1}(1-e^{-\newdelta/b^2})^{-1}-1
=
\frac{1-(1-e^{-\newdelta/a^2})(1-e^{-\newdelta/b^2})}{(1-e^{-\newdelta/a^2})(1-e^{-\newdelta/b^2})}
\\
&\lesssim 1-(1-e^{-\newdelta/a^2})(1-e^{-\newdelta/b^2})
\\
&= e^{-\newdelta/a^2} + e^{-\newdelta/b^2} - e^{-\newdelta/a^2}e^{-\newdelta/b^2}
\\
&\leq
e^{-\newdelta/a^2} + e^{-\newdelta/b^2}.
\end{align*}
\end{proof}
As an immediate consequence of Theorem \ref{th_crit} and Lemma \ref{lemma_asym} we have the following
result.
\begin{theorem}
\label{th_frame}
Assume that \eqref{eq_assum_cov}, \eqref{eq_assum_decay_window_1_plus},
\eqref{eq_assum_decay_window_2_plus} hold. Then
there are constants $C,\newdelta>0$ depending on $\delta$, $\moment$ and
$\varepsilon$ such that $\curvsystem$ is a frame whenever
\begin{align}
\label{eq_sampling_condition}
\kappa:=\asympt \leq C A.
\end{align}
Moreover, in this case, $\volLambda^{-1}(A - C^{-1}\kappa)$ and $\volLambda^{-1}(B+C^{-1}\kappa)$
are frame bounds for $\curvsystem$.
\end{theorem}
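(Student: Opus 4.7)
The plan is to obtain Theorem \ref{th_frame} as a direct corollary of the two preceding results: Lemma \ref{lemma_asym}, which quantifies the off-diagonal mass $\Delta(\Lambda)$ in terms of the lattice parameters $a,b$, and Theorem \ref{th_crit}, which turns smallness of $\Delta(\Lambda)$ into the frame inequality. No fresh estimates are required.

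First I would invoke Lemma \ref{lemma_asym} to obtain constants $\tau>0$ (depending on $\delta$ and $\moment$) and $C_0>0$ such that
\begin{align*}
\Delta(\Lambda) \leq C_0 \bigl(e^{-\tau/a^2}+e^{-\tau/b^2}\bigr) = C_0\, \kappa
\end{align*}
for all $0<a,b\leq 1$. Setting $C := 1/C_0$ (or any smaller positive constant) and assuming $\kappa \leq CA$ gives $\Delta(\Lambda) \leq C_0\kappa \leq A$. To have the strict inequality $\Delta(\Lambda)<A$ required by Theorem \ref{th_crit}, I would simply replace $C$ by $C/2$, or equivalently weaken $\leq$ to strict inequality; in either case the net effect is a redefinition of the constant $C$ in the statement.

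Next I would feed this into Theorem \ref{th_crit} to conclude that $\curvsystem$ is a frame with bounds
\begin{align*}
\volLambda^{-1}(A-\Delta(\Lambda)) \quad \text{and} \quad \volLambda^{-1}(B+\Delta(\Lambda)).
\end{align*}
Using $\Delta(\Lambda)\leq C_0\kappa = C^{-1}\kappa$ once more, the lower bound $A-\Delta(\Lambda)$ is at least $A-C^{-1}\kappa$, while the upper bound $B+\Delta(\Lambda)$ is at most $B+C^{-1}\kappa$, yielding exactly the frame bounds stated in \eqref{eq_sampling_condition} and the surrounding text.

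There is essentially no obstacle in this argument; the entire analytic content has already been discharged in Proposition \ref{prop_scale_correlation} and the geometric series computation of Lemma \ref{lemma_asym}. The only bookkeeping item is to keep track of the several successive redefinitions of $C$ and $\tau$ (each strictly smaller than the previous one) so that a single pair of constants depending only on $\delta,\moment,\varepsilon$ can be extracted for the final statement.
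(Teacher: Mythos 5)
Your proposal is correct and is exactly the paper's argument: the paper states Theorem \ref{th_frame} as an immediate consequence of Theorem \ref{th_crit} and Lemma \ref{lemma_asym}, with the constant $C$ absorbing the implicit constant from Lemma \ref{lemma_asym} precisely as you describe. Your remark about adjusting $C$ to pass from $\Delta(\Lambda)\leq A$ to the strict inequality required by Theorem \ref{th_crit} is the only bookkeeping point, and you handle it properly.
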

In particular, for every lattice $\Lambda$ with sufficiently large density (i.e. $a,b \ll 1$), the system $\curvsystem$
is a frame of $\LtRt$ and yields the following expansions
\begin{align}
\label{eq_frame_expansion_1}
f &= \sum_{\lambda \in \Lambda}
\ip{f}{\frameop^{-1} \winzerol} \winzerol
+
\sum_{\lambda \in \Lambda} \sum_{j=1}^{+\infty} \sum_{k=0}^{2^j-1}
\ip{f}{\frameop^{-1} \varphi_{j,k,\lambda}} \varphi_{j,k,\lambda}
\\
\label{eq_frame_expansion_2}
&= \sum_{\lambda \in \Lambda}
\ip{f}{\winzerol} \frameop^{-1} \winzerol
+
\sum_{\lambda \in \Lambda} \sum_{j=1}^{+\infty} \sum_{k=0}^{2^j-1}
\ip{f}{\varphi_{j,k,\lambda}} \frameop^{-1} \varphi_{j,k,\lambda}.
\end{align}
As the lattice parameters $a,b \longrightarrow 0^+$, the ratio of the 
frame bounds tends to $B/A$.

\begin{rem} \rm 
If the window $\varphi$ is given by \eqref{eq_window_intro}, then \eqref{eq_frame_expansion_1} provides a formal
argument for the representability of $L^2$ functions in terms of the Gaussian wavepackets discussed in \cite{ancate12},
which can be then computed with the algorithms discussed there.
\end{rem}

\begin{rem} \rm 
The system $\curvsystem$ is a family of \emph{curvelet molecules} in the sense of \cite{cade05} and hence the expansion
in \eqref{eq_frame_expansion_2} possesses the same asymptotic sparse approximation as curvelets \cite{cado04}.
\end{rem}

\begin{rem} \rm 
As the proof of Proposition \ref{prop_scale_correlation} shows, its
conclusion holds under much weaker conditions  than  those in \eqref{eq_assum_decay_window_1_plus},
\eqref{eq_assum_decay_window_2_plus}, for example polynomial decay.
\end{rem}
\begin{rem} \rm 
Theorem \ref{th_frame} is analogous to a recent result for shearlets \cite{kikuli12}. The factor
$\abs{\xi_1}^{2+\varepsilon}$  in \eqref{eq_assum_decay_window_2_plus} should be compared with the factor
$\abs{\xi_1}^{3+\varepsilon}$ in \cite{kikuli12}.
\end{rem}

\subsection{Approximate reconstruction}
Since the frame criterion in Theorem \ref{th_crit} consists in showing that
the frame operator $\frameop$ is close to $\frameopz$, in practice one often uses
$\frameopz$ to construct an approximate dual frame. This approximation is particularly convenient because
$\frameopz$ is simply the Fourier multiplier with symbol
\begin{align*}
m(\xi) := \abs{\widehat{\winzero}(\xi)}^2 + \sum_{j \geq 1} \sum_{k=0}^{2^j-1} 
\abs{\widehat{\varphi}(B_{j,k} \xi)}^2,
\qquad (\xi \in \Rtst),
\end{align*}
(cf. Lemma \ref{lemma_hatframeop}). (Recall that the Fourier multiplier with symbol $m \in L^\infty(\Rtst)$
is the operator $M_m: L^2(\Rtst) \to L^2(\Rtst)$ given by $\widehat{(M_m f)}(\xi) = m(\xi) \widehat{f}(\xi)$.)

Hence, the approximate dual system is just the image of $\curvsystem$
by the Fourier multiplier with symbol $1/m$. It then readily follows from the proof of Theorem \ref{th_decay_wave}
and the estimates of Section \ref{sec_asym} that this gives a reconstruction error of at most $\Delta(\Lambda)/A
\lesssim
\asympt$, with $a,b$ the lattice parameters.

By slightly relaxing this bound one can gain certain liberty in the design of the approximate dual system, which can be
exploited to improve the function properties. If one considers a second window $\psi \in \LtRt$, then the frame-type
operator $\cframeop: \LtRt \to \LtRt$,
\begin{align*}
\cframeop f := 
\sum_{\lambda \in \Lambda}
\ip{f}{T_\lambda \winzero} T_\lambda \winzero
+
\sum_{\lambda \in \Lambda} \sum_{j=1}^{+\infty} \sum_{k=0}^{2^j-1}
\ip{f}{\dilAjk T_\lambda \psi} \dilAjk T_\lambda \varphi
\end{align*}
can be decomposed in a way completely analogous to the one in Lemma \ref{lemma_hatframeop}. (This can be seen from
the proof of Lemma \ref{lemma_hatframeop}, or obtained a posteriori by polarization).
The Daubechies-like criterion in this case is expressed in terms of lower and upper bounds for
\begin{align}
\label{eq_approx_m}
\widetilde{m}(\xi) := \abs{\widehat{\winzero}(\xi)}^2 + 
\sum_{j \geq 1} \sum_{k=0}^{2^j-1} 
\widehat{\varphi}(B_{j,k} \xi)\overline{\widehat{\psi}(B_{j,k} \xi)},
\end{align}
and upper bounds for
\begin{align*}
\widetilde{\Delta}(\Lambda) := \sum_{\gamma \in \Gamma \setminus \sett{0}}
\max\{\widetilde{\corr}_1(\gamma),\widetilde{\corr}_2(\gamma)\},
\end{align*}
where
\begin{align*}
\widetilde{\corr}_1(\zeta) &:= \supess_{\xi \in \Rtst}
\left(
\abs{\widehat{\winzero}(\xi)} \abs{\widehat{\winzero}(\xi-\zeta)}+
\sum_{j=1}^{+\infty} \sum_{k=0}^{2^j-1}
\abs{\widehat{\varphi}(B_{j,k}\xi)} |\hat{\psi}(B_{j,k}\xi-\zeta)|
\right),
\\
\widetilde{\corr}_2(\zeta) &:= \supess_{\xi \in \Rtst}
\left(
\abs{\widehat{\winzero}(\xi)} \abs{\widehat{\winzero}(\xi+\zeta)}+
\sum_{j=1}^{+\infty} \sum_{k=0}^{2^j-1}
\abs{\widehat{\varphi}(B_{j,k}\xi+\zeta)} |\hat{\psi}(B_{j,k}\xi)|
\right).
\end{align*}
In our case it makes sense to choose $\psi$ as a small modification of $\varphi$ that enforces some extra frequency
cancellation.
Let $\deltapsi>0$. According
to our assumptions, the set $E_\deltapsi := \set{\xi \in \Rtst}{\abs{\widehat{\varphi}(\xi)} > \deltapsi}$ is an open
bounded set
at positive distance from the origin. Let $\eta:\Rtst \to [0,1]$ be a smooth function with compact support not
containing the origin such that $\eta \equiv 1$ on $E_\deltapsi$. Let $\psi \in \LtRt$ be given by
\begin{align*}
\widehat{\psi} := \eta \widehat{\varphi}.
\end{align*}
The following lemma shows that this approximation is compatible with the quantities in
Daubechies' criterion.
\begin{lemma}
\label{lemma_comp}
The following estimates hold.
\begin{align*}
&\abs{m(\xi) - \widetilde{m}(\xi)} \leq \norm{\widehat{\varphi}}_* \deltapsi \approx \deltapsi,
\\
&0 \leq \widetilde{\Delta}(\Lambda) \leq \Delta(\Lambda).
\end{align*}
\end{lemma}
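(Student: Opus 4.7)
\medskip

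The plan is to treat the two inequalities separately, as they rely on two very different but elementary facts about $\widehat{\psi}=\eta\widehat{\varphi}$: namely that $\widehat{\varphi}-\widehat{\psi}=(1-\eta)\widehat{\varphi}$ is pointwise bounded by $\deltapsi$, and that $|\widehat{\psi}|\leq|\widehat{\varphi}|$ pointwise.

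For the first estimate, I would write
\[
m(\xi)-\widetilde{m}(\xi)=\sum_{j\geq 1}\sum_{k=0}^{2^j-1}\widehat{\varphi}(B_{j,k}\xi)\,\overline{(\widehat{\varphi}-\widehat{\psi})(B_{j,k}\xi)}.
\]
Because $\eta\equiv 1$ on $E_{\deltapsi}=\{|\widehat{\varphi}|>\deltapsi\}$, the factor $(1-\eta(\zeta))\widehat{\varphi}(\zeta)$ vanishes on $E_{\deltapsi}$ and is bounded in modulus by $|\widehat{\varphi}(\zeta)|\leq\deltapsi$ on the complement; hence $|(\widehat{\varphi}-\widehat{\psi})(\zeta)|\leq\deltapsi$ for every $\zeta\in\Rtst$. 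Pulling this uniform bound out of the sum gives
\[
|m(\xi)-\widetilde{m}(\xi)|\leq\deltapsi\sum_{j\geq 1}\sum_{k=0}^{2^j-1}|\widehat{\varphi}(B_{j,k}\xi)|\leq\deltapsi\,\norm{\widehat{\varphi}}_*,
\]
and Remark \ref{rem_star_norm_finite} guarantees that $\norm{\widehat{\varphi}}_*<+\infty$, i.e.\ the right-hand side is $\approx\deltapsi$.

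For the second estimate I would compare the auxiliary correlation functions term by term with $\corr$. Since $0\leq\eta\leq 1$, we have $|\widehat{\psi}(\zeta)|\leq|\widehat{\varphi}(\zeta)|$ for every $\zeta$. Substituting this pointwise bound into the definitions yields immediately
\[
\widetilde{\corr}_1(\zeta)\leq\corr(\zeta),\qquad\widetilde{\corr}_2(\zeta)\leq\corr(-\zeta),
\]
where for the second inequality it is enough to compare the summands indexed by the same $(j,k)$ (the variable $B_{j,k}\xi$ and the shift $\zeta$ appear in the same positions as in $\corr(-\zeta)$). Taking the max over the two choices, $\max\{\widetilde{\corr}_1(\gamma),\widetilde{\corr}_2(\gamma)\}\leq\max\{\corr(\gamma),\corr(-\gamma)\}$ for every $\gamma$, and summing over $\gamma\in\Gamma\setminus\{0\}$ gives $\widetilde{\Delta}(\Lambda)\leq\Delta(\Lambda)$. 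Non-negativity of $\widetilde{\Delta}(\Lambda)$ is obvious from its definition as a sum of non-negative quantities.

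Neither step presents a real obstacle; the only point that requires a small amount of care is the term-by-term comparison in the $\widetilde{\corr}_2$ bound, since the roles of $\widehat{\varphi}$ and $\widehat{\psi}$ are switched relative to $\widetilde{\corr}_1$ and one has to notice that $\corr$ is not in general symmetric, so we must compare $\widetilde{\corr}_2(\zeta)$ with $\corr(-\zeta)$ rather than with $\corr(\zeta)$. This is precisely why the definition of $\widetilde{\Delta}(\Lambda)$ uses $\max\{\widetilde{\corr}_1,\widetilde{\corr}_2\}$ and the comparison matches the $\max\{\corr(\gamma),\corr(-\gamma)\}$ in $\Delta(\Lambda)$.
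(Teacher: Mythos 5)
Your proposal is correct and follows essentially the same route as the paper: the first bound comes from writing $m-\widetilde{m}$ as the sum with factor $(1-\eta)\overline{\widehat{\varphi}}$, using the pointwise bound $\deltapsi$ and the finiteness of $\norm{\widehat{\varphi}}_*$ from Remark \ref{rem_star_norm_finite}; the second from the pointwise inequality $|\widehat{\psi}|=\eta\abs{\widehat{\varphi}}\leq\abs{\widehat{\varphi}}$ giving $\widetilde{\corr}_1(\zeta)\leq\corr(\zeta)$ and $\widetilde{\corr}_2(\zeta)\leq\corr(-\zeta)$. Your added remarks (why $(1-\eta)\abs{\widehat{\varphi}}\leq\deltapsi$ everywhere, and why $\widetilde{\corr}_2$ must be compared with $\corr(-\zeta)$) are accurate elaborations of steps the paper leaves implicit.
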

\begin{proof}
Recall that by Remark \ref{rem_star_norm_finite}
$\norm{\widehat{\varphi}}_* <+\infty$
and note that, by definition, $(1-\eta(\xi))\abs{\widehat{\varphi}(\xi)} \leq \deltapsi$. Hence,
we simply estimate
\begin{align*}
\abs{m(\xi) - \widetilde{m}(\xi)}
&=\abs{
\sum_{j \geq 1} \sum_{k=0}^{2^j-1} \widehat{\varphi}(B_{j,k} \xi)
(1-\eta(B_{j,k} \xi))\overline{\widehat{\varphi}(B_{j,k} \xi)}}
\\
&\leq \deltapsi \sum_{j \geq 1} \sum_{k=0}^{2^j-1} \abs{\widehat{\varphi}(B_{j,k} \xi)}
= \deltapsi \norm{\widehat{\varphi}}_*.
\end{align*}
The bound for $\widetilde{\Delta}(\Lambda)$ follows from 
the fact that $|\widehat{\psi}| = \abs{\widehat{\varphi}} \eta \leq \abs{\widehat{\varphi}}$
and consequently
$\widetilde{\corr}_1(\zeta) \leq \corr(\zeta)$ 
and
$\widetilde{\corr}_2(\zeta) \leq \corr(-\zeta)$.
\end{proof}
As a consequence of these estimates, if $\deltapsi \norm{\widehat{\varphi}}_* < A$, then
\begin{align}
\label{eq_approx_covering}
0< \widetilde{A} := A - \deltapsi \norm{\widehat{\varphi}}_* \leq \widetilde{m}(\xi)
\leq \widetilde{B} := B + \deltapsi \norm{\widehat{\varphi}}_* < +\infty,
\end{align}
and the Fourier multiplier with symbol $\widetilde{m}$ is invertible on $\LtRt$.
We can then consider the following system
\begin{align*}
&\curvsystemdual := \set{\widetilde{\winzerol}}{\lambda \in \Lambda} \cup
\set{\widetilde{\varphi}_{j,k,\lambda}}{j \geq 1, 1 \leq k \leq 2^j-1, \lambda \in \Lambda},
\\
&\widehat{\widetilde{\winzerol}}(\xi) := \frac{1}{\widetilde{m}(\xi)} \widehat{T_\lambda \winzero}(\xi),
\\
&\widehat{\widetilde{\varphi}_{j,k,\lambda}}(\xi) := \frac{1}{\widetilde{m}(\xi)} \widehat{\dilAjk T_\lambda \psi}(\xi).
\end{align*}
This provides the following approximate reconstruction.
\begin{theorem}
\label{th_dual}
Let $\deltapsi \norm{\widehat{\varphi}}_* < A$ and $\Delta(\Lambda) < A - \deltapsi
\norm{\widehat{\varphi}}_*$.
Then  every $f \in \LtRt$ admits the following approximate expansion.
\begin{align}
\label{eq_app_exp}
\widetilde{f} &:= \volLambda\left(
\sum_{\lambda \in \Lambda} \ip{f}{\widetilde{\winzerol}} \winzerol
+ 
\sum_{j \geq 1} \sum_{k=0}^{2^j-1} \sum_{\lambda \in \Lambda}
\ip{f}{\widetilde{\varphi}_{j,k,\lambda}} \varphi_{j,k,\lambda}\right),
\end{align}
with $\norm{f-\widetilde{f}}_2 \leq
\frac{\Delta(\Lambda)}{A-\deltapsi \norm{\widehat{\varphi}}_*}\norm{f}_2
\lesssim (\asympt)\norm{f}_2$.
\end{theorem}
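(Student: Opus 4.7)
The strategy is to recognize $\widetilde f$ as $\volLambda\,\cframeop\bigl(M_{1/\widetilde m}f\bigr)$, where $M_{1/\widetilde m}$ is the Fourier multiplier with symbol $1/\widetilde m$, and then invoke a mixed analogue of Lemma \ref{lemma_hatframeop} for $\cframeop$ whose $\gamma=0$ term is exactly the multiplier $M_{\widetilde m}$. The composition $M_{\widetilde m}\circ M_{1/\widetilde m}$ reproduces $f$, so only the $\gamma\neq 0$ tail remains to be bounded.

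First, I would unpack the dual system. Because $\eta$ is real-valued, so is $\widetilde m$, whence $M_{1/\widetilde m}$ is self-adjoint on $\LtRt$. Plancherel's formula together with the definitions of $\widetilde{\winzerol}$ and $\widetilde\varphi_{j,k,\lambda}$ give
\[
\ip{f}{\widetilde{\winzerol}} = \ip{M_{1/\widetilde m}f}{T_\lambda\winzero}, \qquad
\ip{f}{\widetilde\varphi_{j,k,\lambda}} = \ip{M_{1/\widetilde m}f}{\dilAjk T_\lambda\psi}.
\]
Setting $g:=M_{1/\widetilde m}f$, the bracketed expression in \eqref{eq_app_exp} is literally $\cframeop g$, so $\widetilde f = \volLambda\,\cframeop g$.

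Second, I would derive the Poisson-summation decomposition of $\cframeop$. The computation in the proof of Lemma \ref{lemma_hatframeop} is linear in the analysis window and transfers verbatim after replacing $\overline{\widehat\varphi}$ by $\overline{\widehat\psi}$ in the appropriate factor, yielding
\[
\mathcal F (\volLambda\,\cframeop)\mathcal F^{-1}h(\xi) = \widetilde m(\xi) h(\xi) + \sum_{\gamma\in\Gamma\setminus\{0\}} \widetilde S_{\Lambda,\gamma}h(\xi),
\]
where $\widetilde m$ is the symbol from \eqref{eq_approx_m} and each $\widetilde S_{\Lambda,\gamma}$ is the direct analogue of \eqref{eq_hatframeopg} with $\overline{\widehat\varphi(B_{j,k}\xi-\gamma)}$ replaced by $\overline{\widehat\psi(B_{j,k}\xi-\gamma)}$. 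Applied to $\hat g=\hat f/\widetilde m$ the $\gamma=0$ contribution is exactly $\hat f$, so
\[
\widehat{\widetilde f}(\xi)-\hat f(\xi)= \sum_{\gamma\neq 0} \widetilde S_{\Lambda,\gamma}\bigl(\hat f/\widetilde m\bigr)(\xi).
\]

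Finally, I would estimate this error. The mixed analogue of Lemma \ref{lemma_bound_hatframeopg}, proved by the same $L^1$--$L^\infty$ interpolation using $\widetilde\corr_1,\widetilde\corr_2$ in place of $\corr$, gives $\norm{\widetilde S_{\Lambda,\gamma}h}_2\leq \max\{\widetilde\corr_1(\gamma),\widetilde\corr_2(\gamma)\}\norm{h}_2$. Summing over $\gamma\neq 0$ and using Lemma \ref{lemma_comp} (which supplies $\widetilde\Delta(\Lambda)\leq \Delta(\Lambda)$ and $\widetilde m\geq \widetilde A:= A-\deltapsi\norm{\widehat\varphi}_*$ via \eqref{eq_approx_covering}), Plancherel then yields
\[
\norm{\widetilde f-f}_2 \leq \frac{\widetilde\Delta(\Lambda)}{\widetilde A}\norm{f}_2
\leq \frac{\Delta(\Lambda)}{A-\deltapsi\norm{\widehat\varphi}_*}\norm{f}_2,
\]
and the asymptotic $\lesssim \asympt$ is then immediate from Lemma \ref{lemma_asym}. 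There is no deep obstacle: the main bookkeeping is verifying that the Poisson decomposition and the $L^1$--$L^\infty$ bound of Section \ref{sec_dec} remain valid for the mixed operator $\cframeop$, but since both arguments are linear in each window they transfer directly, and the only genuinely new analytic input, the comparison of $\widetilde m$ with $m$ and $\widetilde\Delta(\Lambda)$ with $\Delta(\Lambda)$, is already supplied by Lemma \ref{lemma_comp}.
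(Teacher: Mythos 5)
Your proposal is correct and follows essentially the same route as the paper's proof: identify $\widetilde f=\volLambda\,\cframeop M_{\widetilde m}^{-1}f$, bound $\norm{\volLambda\,\cframeop h-M_{\widetilde m}h}_2\leq\widetilde\Delta(\Lambda)\norm{h}_2$ via the mixed Poisson decomposition exactly as in Theorem \ref{th_crit}, and then apply Lemma \ref{lemma_comp} together with \eqref{eq_approx_covering}. The only difference is that you spell out the transfer of Lemmas \ref{lemma_hatframeop} and \ref{lemma_bound_hatframeopg} to the two-window operator, which the paper leaves implicit.
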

\begin{rem} \rm 
\label{rem_mult}
Note that the coefficients in \eqref{eq_app_exp} are given by
\begin{align*}
\ip{f}{\widetilde{\varphi}_{j,k,\lambda}}=\ip{M_{1/\widetilde{m}} f}{\psi_{j,k,\lambda}},
\end{align*}
where $M_{1/\widetilde{m}}$ is the Fourier multiplier with symbol
$1/\widetilde{m}$,  i.e.,  $\widehat{(M_{1/\widetilde{m}} f)}(\xi) =
1/\widetilde{m}(\xi) \widehat{f}(\xi)$,  
and the packets $\psi_{j,k,\lambda}=\dilAjk T_\lambda \psi$ have rotation-dilation structure.
That is why in practice we prefer the expansion in \eqref{eq_app_exp} to the ``abstract''
expansion in \eqref{eq_frame_expansion_1}.
\end{rem}
\begin{proof}[Proof of Theorem \ref{th_dual}]
Let us denote by $M_{\widetilde{m}}$ the Fourier multiplier with symbol $\widetilde{m}$.
As in the proof of Theorem \ref{th_crit}, for every $f \in \LtRt$
\begin{align*}
\norm{\volLambda \cframeop f-M_{\widetilde{m}}f}_2 \leq \widetilde{\Delta}(\Lambda) \norm{f}_2. 
\end{align*}
Applying this to $M_{\widetilde{m}}^{-1}f$ and using Lemma \ref{lemma_comp} and \eqref{eq_approx_covering}
we obtain
\begin{align*}
\norm{\volLambda \cframeop M_{\widetilde{m}}^{-1}f-f}_2 \leq
\Delta(\Lambda) \norm{M_{\widetilde{m}}^{-1}f}_2 
\leq \frac{\Delta(\Lambda)}{A - \deltapsi \norm{\widehat{\varphi}}_*}\norm{f}_2.
\end{align*}
Finally note that
$\widetilde{f}=\volLambda\cframeop M_{1/\widetilde{m}}f=\volLambda\cframeop M_{\widetilde{m}}^{-1}f$.
\end{proof}
\section{Decay estimates away from the wavefront set}
It is well-know that the continuous transform associated with parabolic representations detects the wavefront set of a
distribution, provided that the generating window has sufficiently many vanishing moments \cite{cado05, kula09,
gr11-8} (see also \cite{ge90}). In the case of wavepacket coefficients, the situation is more technical since one needs
to approximate a given phase-space point by discrete parameters (see \cite{jopiteto12} for a similar problem related to
Gabor expansions).
\subsection{Wavefront sets with wavepackets}
\label{sec_wavefront}
Let $(x_0,\theta_0) \in \Rtst \times [0,2\pi)$. For each scale $j \geq 1$, we select a wavepacket of scale $j$ that
is localized near $x_0$ and is approximately aligned to $\theta_0$. Recall that the wavepacket $\varphi_{j,k,\lambda}$
is
localized near $A_{j,k}^{-1} \lambda$ and aligned to an angle of $\frac{-2\pi k}{2^j}$ radians. We choose the
parameters $k_j=k_j(x_0,\theta_0) \in \sett{0, \ldots, 2^j-1}$ and $\lambda_j=\lambda_j(x_0,\theta_0) \in \Lambda$
that give the best approximation of $\theta_0$ and $x_0$ respectively. More precisely, for every
$j \geq 1$ we choose $(k_j, \lambda_j)$ such that
\begin{align}
\label{eq_best_kj}
&\frac{2\pi k_j}{2^j} \leq 2\pi-\theta_0 \leq \frac{2\pi (k_j+1)}{2^j},
\\
\label{eq_best_lambdaj}
&\abs{A_{j,k_j} x_0 - \lambda_j} \leq L_\Lambda,
\end{align}
where $L_\Lambda$ is a constant that depends on the lattice $\Lambda$
(namely the diameter of its fundamental domain).
For certain $(x_0,\theta_0)$ there is more than one possible choice of $(k_j, \lambda_j)$. Any of these will be
adequate. The resulting sequence $\sett{(k_j, \lambda_j): j \geq 1}$ will be called
the \emph{grid parameters} related to $(x_0,\theta_0)$.

The objective of this section is to show that when $(x_0,\theta_0)$ does not belong to the wavefront set of $f$,
then the numbers $\ip{f}{\varphi_{j,k_j,\lambda_j}}$ decay fast as $j \rightarrow +\infty$.

We denote by $\cone_\varepsilon$ the cone of width $\varepsilon$ aligned along the $\xi_1$-axis
\begin{align}
\label{eq_def_cone}
\cone_\varepsilon := \set{\xi \in \Rtst}{\abs{\xi_2} \leq \varepsilon \abs{\xi_1}}.
\end{align}
We say that a distribution $f \in \mathcal{S}'(\Rtst)$ belongs to the microlocal Sobolev space $H^s(x_0,\theta_0)$ if
there exist $\varepsilon>0$ and a smooth compactly-supported function $\eta$ with $\eta \equiv 1$ near $x_0$ and
such that
\begin{align}
\label{eq_micro_cone}
\int_{R_{\theta_0} \cone_\varepsilon} \abs{\widehat{\eta f}(\xi)}^2 \abs{\xi}^{2s} \,d\xi <+\infty.
\end{align}
Since we work with windows with a limited number of vanishing moments, we further consider
the space $H_M^s(x_0,\theta_0)$ consisting of distributions in $H^s(x_0,\theta_0)$ that satisfy, in
addition to \eqref{eq_micro_cone},
\begin{align}
\label{eq_big_bound_micro}
\int_{\Rtst} \abs{\widehat{\eta f}(\xi)}^2 (1+\abs{\xi})^{-2M} \,d\xi <+\infty.
\end{align}
Note that $H^s(x_0,\theta_0) = \bigcup_{M>0} H_M^s(x_0,\theta_0)$.

We now state the main microlocal estimate. Although we are mainly interested in the case
of $L^2$-functions (i.e. $M=0$), the estimate is valid for distributions
provided that the window $\varphi$ belongs to the Schwartz class.
\begin{theorem}
\label{th_decay_wave}
Let $s, M \geq 0$. Assume that $\varphi \in \mathcal{S}(\Rtst)$ and that
$\moment> 2s+2M-1/2$.

Let $(x_0,\theta_0) \in \Rtst \times [0,2\pi)$, and let
$\sett{(k_j, \lambda_j): j\geq 1}$ be the corresponding grid parameters
given by \eqref{eq_best_kj} and \eqref{eq_best_lambdaj}. Let $f \in H_M^s(x_0,\theta_0)$, then
\begin{align*}
\sum_{j \geq 1} \abs{\ip{f}{\varphi_{j,k_j,\lambda_j}}}^2 4^{2js} < +\infty.
\end{align*}
\end{theorem}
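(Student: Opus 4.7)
The plan is to split $f = \eta f + (1-\eta)f$, with $\eta$ being the smooth cutoff supplied by the definition of $H_M^s(x_0,\theta_0)$. For the remainder $(1-\eta)f$, note that this distribution vanishes on an open neighborhood of $x_0$, while each packet $\varphi_{j,k_j,\lambda_j}$ concentrates on an anisotropic box of scales $4^{-j}\times 2^{-j}$ centered at $A_{j,k_j}^{-1}\lambda_j$, which by \eqref{eq_best_lambdaj} lies within distance $2^{-j}L_\Lambda$ of $x_0$. Since $\varphi\in\mathcal{S}(\Rtst)$, every Schwartz seminorm of $(1-\eta)\varphi_{j,k_j,\lambda_j}$ decays faster than any negative power of $2^j$, so $|\ip{(1-\eta)f}{\varphi_{j,k_j,\lambda_j}}|$ decays rapidly in $j$ and $\sum_j|\ip{(1-\eta)f}{\varphi_{j,k_j,\lambda_j}}|^2\,4^{2js}$ is trivially finite.

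For $g:=\eta f$ I move to the Fourier side via
\[
\ip{g}{\varphi_{j,k_j,\lambda_j}} = 8^{-j/2}\int \hat{g}(\xi)\,e^{2\pi i\lambda_j\cdot B_{j,k_j}\xi}\,\overline{\hat{\varphi}(B_{j,k_j}\xi)}\,d\xi,
\]
and partition the integral into the ``good'' cone $R_{\theta_0}\cone_\varepsilon$, where \eqref{eq_micro_cone} provides microlocal $H^s$-regularity, and its complement, the ``bad'' region. The alignment \eqref{eq_best_kj} ensures that the cone axis $\theta_0$ differs from the packet's axis $-2\pi k_j/2^j$ by at most $2\pi/2^j$, negligible for large $j$ compared with the cone aperture $\arctan\varepsilon$.

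In the bad region I apply Cauchy--Schwarz with weight $(1+|\xi|)^{-2M}$ against the global $H^{-M}$ control from \eqref{eq_big_bound_micro}, reducing the problem to $\int_{\mathrm{bad}}|\hat{\varphi}(B_{j,k_j}\xi)|^2(1+|\xi|)^{2M}\,d\xi$. After the substitution $v=B_{j,k_j}\xi$ the bad region translates into $|v_2|\gtrsim 2^j|v_1|$, forcing $v$ off the $v_1$-axis at scale $2^j$. Using $|\hat{\varphi}(v)|^2\lesssim|v_1|^{2\moment}$ for $|v_1|\le 1$ together with the Gaussian tail $\int_{|v_2|\ge c\, 2^j|v_1|}(1+|v_2|)^{2M}e^{-2\delta v_2^2}\,dv_2$, splitting at $|v_1|\sim 2^{-j}$ and rescaling $u=2^jv_1$, one obtains a $v$-integral of order $2^{-j(2\moment+1)}$. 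Multiplying by the Jacobian $8^j$ and $(1+|A_{j,k_j}^*v|)^{2M}\lesssim 4^{2Mj}$ gives $|I_{\mathrm{bad}}|^2\lesssim 2^{j(4M-2\moment-1)}$, and $\sum_j|I_{\mathrm{bad}}|^2\,4^{2js}\lesssim\sum_j 2^{j(4s+4M-2\moment-1)}$ converges precisely when $\moment>2s+2M-1/2$.

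In the good region I use a dyadic decomposition $C_\ell:=\{4^{\ell-1}\le|\xi|\le 4^\ell\}$, setting $A_\ell:=\int_{C_\ell\cap\mathrm{good}}|\hat{g}|^2\,d\xi$, so that $\sum_\ell 4^{2\ell s}A_\ell$ is controlled by the microlocal $H^s$-norm. Cauchy--Schwarz on each annulus with weight $|\xi|^{2s}$, together with the fact that the $v$-preimage of $C_\ell$ forces $|v_1|\lesssim 4^{\ell-j}$, gives $|I_{\mathrm{good},j,\ell}|^2\lesssim A_\ell\cdot 4^{(\ell-j)(2\moment+1)}$ for $\ell\le j$ and super-exponentially small contributions for $\ell>j$. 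Reassembling $|I_{\mathrm{good},j}|^2$ via weighted Cauchy--Schwarz in $\ell$ with weights $\omega_\ell=4^{(\ell-j)\alpha}$ for a suitable $0<\alpha<2\moment+1-2s$, swapping the $j$ and $\ell$ summations, and summing the resulting geometric series yields $\sum_j|I_{\mathrm{good}}|^2\,4^{2js}\lesssim\sum_\ell 4^{2\ell s}A_\ell<\infty$ under the weaker condition $\moment>s-1/2$. The hard part is the bad-region estimate, where a careful polar-type accounting of the Gaussian tail of $\hat{\varphi}$ in $v_2$ against the vanishing moments $|v_1|^{\moment}$ is what produces the sharp $-j(2\moment+1)$ exponent matching the hypothesis $\moment>2s+2M-1/2$.
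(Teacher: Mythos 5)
Your proof is correct and arrives at exactly the right threshold $\moment>2s+2M-1/2$, but it is organized quite differently from the paper's argument. Both treat the remainder $(1-\eta)f$ the same way (rapid decay of the pairing because the packet concentrates within $2^{-j}L_\Lambda$ of $x_0$ while the distribution vanishes there). For the main term, the paper transfers the rotation and translation onto $f$, decomposes the \emph{window} $\widehat{\varphi}$ over the anisotropic dyadic boxes $W_{r,t}$ of \eqref{eq_sets_Wrt}, and for each fixed $(r,t)$ splits the scales at $j_1(r,t)$ according to whether $D_jW_{r,t}$ has entered the good cone; the summation over $j$ is then controlled by the bounded-overlap Lemma~\ref{lemma_rot_overlaps} and the summation over $(r,t)$ by the moment/decay bound \eqref{eq_estimate_Krt} on the $K_{r,t}$. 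You instead keep the packet whole and partition \emph{frequency space} into $R_{\theta_0}\cone_\varepsilon$ and its complement: on the complement, Cauchy--Schwarz against $(1+\abs{\xi})^{-2M}$ together with a direct computation of the Gaussian tail of $\widehat{\varphi}$ over $\sett{\abs{v_2}\gtrsim 2^j\abs{v_1}}$ (split at $\abs{v_1}\sim 2^{-j}$) produces the $2^{-j(2\moment+1)}$ gain and the binding constraint $\moment>2s+2M-1/2$; on the cone, dyadic annuli and a Schur-type weighted summation need only $\moment>s-1/2$. This reproduces precisely the paper's two regimes $j\geq j_1(r,t)$ versus $j<j_1(r,t)$, with the overlap lemma replaced by explicit geometric-series bookkeeping. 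Your route is more self-contained and leans harder on the explicit super-exponential decay of $\widehat{\varphi}$, whereas the paper isolates the geometry in a reusable overlap lemma and only needs summability of the $K_{r,t}$, so it tolerates weaker decay. Two points to tighten: the identification of the bad region with $\abs{v_2}\gtrsim 2^j\abs{v_1}$ requires the misalignment $\alpha_j\leq 2\pi 2^{-j}$ to be small relative to $\arctan\varepsilon$, hence holds only for $j\geq j_0(\varepsilon)$, and the finitely many remaining terms must be noted to be individually finite; and the pairing of the tempered distribution $(1-\eta)f$ with the packet should be justified through finitely many Schwartz seminorms restricted off a fixed ball around the origin, as in the paper's Step 1.
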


Since the parameters $(k_j,\lambda_j)$ do not exactly capture the pair $(x_0,\theta_0)$
but only provide the best approximation at scale $j$, the proof of Theorem \ref{th_decay_wave}
requires us to control a certain scale-dependent approximation. For this reason we need a microlocal
version of Lemma \ref{lemma_Vst}, as formulated by the following technical statement.
\begin{lemma}
\label{lemma_rot_overlaps}
For $r, t \in \Zst$ consider the intervals,
\begin{align}
\label{eq_sets_Wrt}
W_{r,t} := \set{\xi \in \Rtst}{4^{r-1} \leq \abs{\xi_1} \leq 4^r, 2^{t-1} \leq \abs{\xi_2} \leq 2^t}.
\end{align}
For each $j \geq 1$, let $\theta_j \in [-2^{-j} 2\pi,2^{-j} 2\pi]$ be arbitrary. Then
\begin{align*}
\sup_{r,t \in \Zst} \sup_{\xi \in \Rtst} \sum_{j \geq 1} 1_{R_{\theta_j}D_j W_{r,t}} (\xi) < +\infty.
\end{align*}
That is, each of the families of sets $\sett{R_{\theta_j}D_j W_{r,t}: j\geq 1}$ has a number of overlaps that is bounded
independently of $r,t$.
\end{lemma}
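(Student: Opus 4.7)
My plan is to exploit the fact that each $R_{\theta_j}$ is an isometry, so the overlap count for a fixed $\xi$ is dictated entirely by the range of $\abs{\xi}$, and then to use the geometric growth of the radii of the dyadic parabolic boxes $D_j W_{r,t}$ in $j$.

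Concretely, if $\xi \in R_{\theta_j} D_j W_{r,t}$, then $\xi = R_{\theta_j} y$ for some $y \in D_j W_{r,t}$, and $\abs{\xi}=\abs{y}$. Writing $y = D_j z$ with $z \in W_{r,t}$, one has $\abs{y_1}=4^j\abs{z_1} \in [4^{r+j-1},4^{r+j}]$ and $\abs{y_2}=2^j\abs{z_2} \in [2^{t+j-1},2^{t+j}]$, so that
\begin{align*}
N_{j-1}^2 \leq \abs{\xi}^2 \leq N_j^2, \qquad \text{where } N_k := \sqrt{16^{r+k}+4^{t+k}}.
\end{align*}
Since $N_{j+1}^2=16\cdot 16^{r+j}+4\cdot 4^{t+j}\geq 4 N_j^2$, we have $N_{j+1}\geq 2N_j$, so the intervals $[N_{j-1},N_j]$ for $j\geq 1$ are pairwise essentially disjoint, meeting only at common endpoints. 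Thus a given value $\abs{\xi}$ lies in at most two of them. This bound is uniform in $r,t$, in $\xi$, and in the choice of rotations $\theta_j$, which gives the claim (in fact with constant $2$).

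I do not anticipate a serious obstacle. The point worth highlighting is that the smallness hypothesis $\abs{\theta_j}\leq 2\pi\cdot 2^{-j}$ plays no role here, since the rotations enter only as norm-preserving maps. This hypothesis will presumably be essential elsewhere in the microlocal argument of Section~4, when the angles $\theta_j$ must match a prescribed direction at scale $j$, but for the overlap estimate itself the picture is purely radial.
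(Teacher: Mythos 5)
Your proof is correct, and it takes a genuinely different---and more elementary---route than the paper's. The paper argues by bounding the gap $h=j'-j$ between two intersecting sets: it picks a common point, describes it once as an element of $D_{j'}W_{r,t}=D_hD_jW_{r,t}$ and once as an element of $R_{\theta_j-\theta_{j'}}D_jW_{r,t}$, and compares \emph{both} coordinates of the two descriptions, using $\abs{\sin(\theta_j-\theta_{j'})}\lesssim 2^{-j}$ to control the rotation; this forces $h\lesssim 1$. You instead observe that $R_{\theta_j}D_jW_{r,t}$ sits inside the annulus $\set{\xi}{N_{j-1}\leq\abs{\xi}\leq N_j}$ with $N_k=(16^{r+k}+4^{t+k})^{1/2}$, and that these annuli, being consecutive shells determined by a strictly increasing sequence of radii, meet only along spheres, so any point lies in at most two of them. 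Your radial containment $N_{j-1}^2\leq\abs{\xi}^2\leq N_j^2$ checks out exactly, and the resulting bound (constant $2$, uniform in $r,t,\xi$ and in the rotations) is sharper and cleaner than what the paper's argument delivers. Your side remark is also accurate: the hypothesis $\abs{\theta_j}\leq 2\pi\,2^{-j}$ is not needed for this lemma and is used elsewhere, namely to secure the cone inclusion $R_{-\alpha_j}(\cone_{2^{-k}})\subseteq\cone_\varepsilon$ in the proof of Theorem~\ref{th_decay_wave}. One cosmetic point: the doubling $N_{j+1}\geq 2N_j$ that you verify is not what makes the intervals $[N_{j-1},N_j]$ essentially disjoint---strict monotonicity of $(N_k)_k$ already does---so that step can be dropped without loss.
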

\begin{proof}
Let $r,t \in \Zst$ and suppose that $(R_{\theta_j}D_j W_{r,t}) \cap (R_{\theta_{j'}}D_{j'} W_{r,t}) \not= \emptyset$
for some $1 \leq j \leq j'$. Let $h:= j'-j$. We want to find an absolute bound on $h$.

It follows that
there exists some $\xi \in (R_{\theta_j-\theta_{j'}}D_j W_{r,t}) \cap (D_h D_{j} W_{r,t})$.
Since $\xi \in D_h D_{j} W_{r,t}$, we have
\begin{align}
\label{eq_x1}
&\abs{\xi_1} \approx 4^{h+j+r}=2^{2h+2j+2r},
\\
\label{eq_x2}
&\abs{\xi_2} \approx 2^{h+j+t}.
\end{align}
Similarly, since $\xi \in R_{\theta_j-\theta_{j'}}D_j W_{r,t}$
\begin{align*}
&\xi_1 = \cos(\theta_j-\theta_{j'}) \zeta_1 + \sin(\theta_j-\theta_{j'}) \zeta_2,
\\
&\xi_2 = -\sin (\theta_j-\theta_{j'}) \zeta_1 + \cos(\theta_j-\theta_{j'}) \zeta_2,
\end{align*}
for some $\zeta=(\zeta_1,\zeta_2)$ such that $\abs{\zeta_1} \approx 4^{j+r}=2^{2j+2r}$ and
$\abs{\zeta_2} \approx 2^{j+t}$. Since
$\abs{\theta_j-\theta_{j'}} \lesssim 2^{-j} + 2^{-j'} \approx 2^{-j}$, 
we have that $\abs{\sin(\theta_j-\theta_{j'})} \lesssim 2^{-j}$.
Using this and that $\abs{\cos(\theta_j-\theta_{j'})} \leq 1$ we estimate
\begin{align}
&\abs{\xi_1} \lesssim 2^{2j+2r} + 2^{-j} 2^{j+t} = 2^{2j+2r} + 2^t,
\\
&\abs{\xi_2} \lesssim 2^{-j} 2^{2j+2r} + 2^{j+t} = 2^{j+2r} + 2^{j+t}.
\end{align}
Comparing these equations with \eqref{eq_x1} and \eqref{eq_x2} we get
\begin{align*}
&2^{2h+2j+2r} \lesssim 2^{2j+2r} + 2^t,
\\
&2^{h+j+t} \lesssim 2^{j+2r} + 2^{j+t}.
\end{align*}
Hence, there exists an absolute constant $C>0$ such that
\begin{align*}
&2h+2j+2r \leq \max\{2j+2r+C, t+C\},
\\
&h+j+t \leq \max\{j+2r+C, j+t+C\}.
\end{align*}
If $h=j'-j \leq C$, we are done. If on the contrary $h>C$, then the previous estimates reduce to
\begin{align}
\label{eq_h_1}
&2h+2j+2r \leq t+C,
\\
\label{eq_h_2}
&h+t \leq 2r+C.
\end{align}
Since $h,j \geq 0$ we get from \eqref{eq_h_1} that
$2r \leq 2h+2j+2r \leq t+C$. Plugging this into \eqref{eq_h_2} gives
$h+t \leq t+2C$. Hence $h \leq 2C$. This completes the proof.
\end{proof}

\begin{proof}[Proof of Theorem \ref{th_decay_wave}]
For convenience, let us define $\theta_j := \frac{2 \pi k_j}{2^j}$ and
$\varphi_j(x) := \dil_{D_j} \varphi (x) = 8^{j/2} \varphi(4^jx_1, 2^jx_2)$.
Hence $\varphi_{j,k_j,\lambda_j} = \dil_{R_{\theta_j}} \dil_{D_j} T_{\lambda_j} \varphi = 
\dil_{R_{\theta_j}} T_{D_j^{-1} \lambda_j} \varphi_j$. We now further define
\begin{align*}
g_j :=  T_{-D_j^{-1} \lambda_j} \dil_{R_{-\theta_j}} f,
\quad (j\geq 1).
\end{align*}
Consequently,
\begin{align*}
\ip{f}{\varphi_{j,k_j,\lambda_j}} = \ip{g_j}{\varphi_j}.
\end{align*}
Since $f \in H_M^{s}(x_0,\theta_0)$ there exist a smooth compactly-supported function $\eta$ and $\gamma>0$
such that $\eta \equiv 1$ on the Euclidean ball $B_\gamma(x_0)$ and \eqref{eq_micro_cone} and \eqref{eq_big_bound_micro}
hold. Let us consider the functions,
\begin{align*}
g^1_j &:=  T_{-D_j^{-1} \lambda_j} \dil_{R_{-\theta_j}} (\eta f), \quad (j\geq 1),
\\
g^2_j &:=  T_{-D_j^{-1} \lambda_j} \dil_{R_{-\theta_j}} ((1-\eta) f), \quad (j\geq 1).
\end{align*}
Hence,
\begin{align*}
\ip{f}{\varphi_{j,k_j,\lambda_j}} = \ip{g^1_j}{\varphi_j} + \ip{g^2_j}{\varphi_j}.
\end{align*}
The proof consists in showing that the last two terms have the desired decay as $j \rightarrow +\infty$.

\step{Step 1}. We show that
$\sum_{j \geq 1} \abs{\ip{g^2_j}{\varphi_j}}^2 4^{2js} < +\infty$.

Since $\eta \equiv 1$ on $B_\gamma(x_0)$,
\begin{align*}
g^2_j = T_{-D_j^{-1} \lambda_j} \dil_{R_{-\theta_j}} ((1-\eta) f) \equiv 0
\mbox{ on } R_{\theta_j} B_\gamma(x_0)  - D^{-1}_j \lambda_j
= B_\gamma(R_{\theta_j} (x_0 - A_{j,k_j}^{-1} \lambda_j)).
\end{align*}
Using \eqref{eq_best_lambdaj} we estimate,
\begin{align*}
&\norm{R_{\theta_j}(x_0 - A_{j,k_j}^{-1} \lambda_j)} = 
\norm{x_0 - A_{j,k_j}^{-1} \lambda_j}
= \norm{A_{j,k_j}^{-1}(A_{j,k_j} x_0 -  \lambda_j)}
\\
&\qquad 
\leq 2^{-j} \norm{A_{j,k_j} x_0 -  \lambda_j}
\leq 2^{-j} L_\Lambda.
\end{align*}
For $j \gg 0$, $2^{-j} L_\Lambda \leq \gamma/2$ and
therefore
$B_{\gamma/2}(0) \subseteq B_\gamma(R_{\theta_j} (x_0 - A_{j,k_j}^{-1} \lambda_j))$.
Hence, for $j \gg 0$,
\begin{align}
\label{eq_supp_gtj}
g^2_j \equiv 0
\mbox{ on } B_{\gamma/2}(0).
\end{align}
Since $(1-\eta)f \in \mathcal{S}'$, there exist $N$ such that for all $h \in \mathcal{S}$,
\begin{align*}
\abs{\ip{(1-\eta)f}{h}} \lesssim \sum_{\abs{\alpha} \leq N}
\norm{(1+\abs{\cdot})^N \partial^\alpha h}_\infty.
\end{align*}
Using this estimate we have that for all $j \geq 1$,
\begin{align*}
&\abs{\ip{g^2_j}{h}}=
\abs{\ip{T_{-D_j^{-1} \lambda_j} \dil_{R_{-\theta_j}}(1-\eta)f}{h}}
\\
&\qquad =
\abs{\ip{(1-\eta)f}{\dil_{R_{\theta_j}} T_{D_j^{-1} \lambda_j} h}}
\lesssim \sum_{\abs{\alpha} \leq N}
\norm{(1+\abs{\cdot})^N \partial^\alpha (\dil_{R_{\theta_j}} T_{D_j^{-1} \lambda_j} h)}_\infty.
\\
&\qquad \lesssim \sum_{\abs{\alpha} \leq N}
\norm{(1+\abs{\cdot})^N \partial^\alpha T_{D_j^{-1} \lambda_j} h}_\infty
\\
&\qquad \lesssim \sum_{\abs{\alpha} \leq N}
\norm{(1+\abs{\cdot})^N \partial^\alpha h}_\infty,
\end{align*}
where the last two bounds follow from the fact that $\abs{R_{\theta_j}x}=\abs{x}$ and
that, according to \eqref{eq_best_lambdaj},
\begin{align*}
\norm{D_j^{-1} \lambda_j}=\norm{D_j^{-1} (\lambda_j-A_{j, k_j} x_0) + R_{\theta_j} x_0}
\leq 2^{-j} L_\Lambda + \norm{x_0} \lesssim 1.
\end{align*}

Taking into account the vanishing property in \eqref{eq_supp_gtj},
the bound on $\abs{\ip{g^2_j}{h}}$ can be improved for $j \gg 0$ to
\begin{align}
\label{eq_improved_gtj}
\abs{\ip{g^2_j}{h}}
\lesssim \sum_{\abs{\alpha} \leq N}
\norm{(1+\abs{\cdot})^N \partial^\alpha h}_{L^\infty(\Rtst \setminus B_{\gamma'}(0))},
\end{align}
where $\gamma':=\gamma/4$.

We finally use this to bound $\ip{g^2_j}{\varphi_j}$. Since $\varphi \in \mathcal{S}$ we have that for all
$L>2N+2$ there is a constant $C_L>0$ such that for all multi-indices with $\abs{\alpha} \leq N$
\begin{align*}
\abs{\partial^\alpha \varphi (x)} \leq C_L \abs{x}^{-L}.
\end{align*}
Hence, when $\abs{x} \geq \gamma'$,
\begin{align*}
&(1+\abs{x})^N \abs{\partial^\alpha \varphi_j (x)}
\leq
C_{\gamma'} \abs{x}^N \abs{\partial^\alpha \varphi_j (x)}
\\
&\qquad \lesssim 
\abs{x}^N 8^{j/2} 4^{jN} 
\sum_{\beta \leq \alpha}
\abs{\partial^\beta \varphi (4^jx_1, 2^jx_2)}
\\
&\qquad \lesssim C_L \abs{x}^N 4^{j(N+1)} \abs{(4^jx_1, 2^jx_2)}^{-L}
\\
&\qquad \leq C_L 4^{j(N+1)} 2^{-jL} \abs{x}^{N-L} \leq C_L {\gamma'}^{(N-L)} 2^{j(2N+2-L)}.
\end{align*}
Plugging this into \eqref{eq_improved_gtj} shows
that $\abs{\ip{g^2_j}{\varphi_j}} \lesssim 2^{-(2N+2-L)j}$, for all $L > 2N+2$. This clearly implies the desired
estimate.

\step{Step 2}. We show that $\sum_{j \geq 1} \abs{\ip{g^1_j}{\varphi_j}}^2 4^{2js} < +\infty$.

Since $g^1_j =  T_{-D_j^{-1} \lambda_j} \dil_{R_{-\theta_j}} (\eta f)$, we have
\begin{align*}
\abs{\widehat{g^1_j(\xi)}}=\abs{\widehat{\eta f}(R_{-\theta_j} \xi)}, \quad (\xi \in \Rtst).
\end{align*}
So if we let $\alpha_j := 2\pi-\theta_0-\theta_j$ and 
$\Phi(\xi) := \widehat{\eta f}(R_{\theta_0} \xi)$,  we see  that
\begin{align}
\label{eq_abs_g1}
\abs{\widehat{g^1_j(\xi)}}=\Phi(R_{\alpha_j} \xi), \quad (\xi \in \Rtst).
\end{align}
Note that from \eqref{eq_best_kj} we get
\begin{align}
0 \leq \alpha_j \leq 2\pi 2^{-j},\qquad (j\geq 1).
\end{align}
Since $f$ satisfies \eqref{eq_big_bound_micro}, it follows that $\Phi$ satisfies,
\begin{align}
\label{eq_big_bound_Phi}
\int_{\Rtst} \Phi(\xi)^2 (1+\abs{\xi})^{-2M} \,d\xi <+\infty.
\end{align}
In addition, by \eqref{eq_micro_cone}, we have
\begin{align}
\label{eq_phi_int}
\int_{\cone_\varepsilon} \Phi(\xi)^2 \abs{\xi}^{2s} \,d\xi <+\infty,
\end{align}
where $\varepsilon>0$ and the cone $\cone_\varepsilon$ is defined in \eqref{eq_def_cone}.
Let $k \in \Nst$ be such that $2^{-k} < \varepsilon$. Since $\alpha_j \rightarrow 0$, there exist
$j_0 \geq 1$ such that for all $j \geq j_0$,
\begin{align}
\label{eq_inclusion_cones}
R_{-\alpha_j} (\cone_{2^{-k}}) \subseteq \cone_\varepsilon.
\end{align}
Consider the sets $\sett{W_{r,t}: r,t \in \Zst}$ from \eqref{eq_sets_Wrt}.
Since these sets partition $\Rtst$ up to sets of measure zero, we can decompose
$\widehat{\varphi}$ as
\begin{align*}
&\widehat{\varphi} = \sum_{r,t \in \Zst} \widehat{\varphi^{r,t}},
\mbox{ a.e.}
\\
&\widehat{\varphi^{r,t}} := \widehat{\varphi} \cdot 1_{W_{r,t}}.
\end{align*}
Let us define $K_{r,t}\geq 0$ by
\begin{align}
\label{eq_deff_Kst}
K_{r,t}^2 :=
\int_{\Rtst} \abs{\widehat{\varphi^{r,t}}(\xi)}^2 \, d\xi
=
\int_{W_{r,t}} \abs{\widehat{\varphi}(\xi)}^2 \, d\xi.
\end{align}

Our main assumption \eqref{eq_assum_decay_window_2_plus} on the window $\varphi$
implies that
\begin{align*}
K_{r,t}^2 \lesssim 4^{r(2\moment+1)}e^{-2\delta {16}^{r-1}} 2^t e^{-2\delta 4^{t-1}}.
\end{align*}
Hence if $\newa,\newb \in \Rst$
are such that $\newa < (2\moment+1)$ and $\newb > -1$, then
\begin{align}
\label{eq_estimate_Krt}
\sum_{r,t \in \Zst} 4^{-\newa r} 2^{\newb t} K_{r,t}^2 < +\infty.
\end{align}

We also use the notation $\widehat{\varphi^{r,t}_j}(\xi) := 
\dil_{D_{-j}}\widehat{\varphi^{r,t}}(\xi)
=8^{- j/2} \widehat{\varphi^{r,t}}(4^{-j}\xi_1,2^{-j}\xi_2)$,
so for each $j \geq 1$
\begin{align*}
\widehat{\varphi_j} = \sum_{r,t \in \Zst} \widehat{\varphi^{r,t}_j}.
\end{align*}
Fix $r,t \in \Zst$ and let us bound $\ip{g^1_j}{\varphi^{r,t}_j}$.

Since $\widehat{\varphi^{r,t}}$ is supported on $W_{r,t}$, its dilation $\widehat{\varphi^{r,t}_j}$ is
supported on $D_j W_{r,t}$. Recall the numbers $k,j_0$ from \eqref{eq_inclusion_cones}.
The set $D_j W_{r,t}$ is contained in the cone $V_{2^{-k}}$, if $j \geq k+t-2r+2$.
Let us denote $j_1=j_1(r,t):=\max\{j_0, k+t-2r+2\}$. For $j \geq j_1(r,t)$ we bound,
\begin{align*}
&\abs{\ip{g^1_j}{\varphi^{r,t}_j}}
\leq 8^{- j/2} \int_{D_j W_{r,t}} \abs{\widehat{g^1_j}(\xi)}
\abs{\widehat{\varphi^{r,t}}(4^{-j}\xi_1,2^{-j}\xi_2)} \, d\xi
\\
&\qquad
\leq
8^{- j/2} 4^{-js} \int_{D_j W_{r,t}}
\abs{\widehat{g^1_j}(\xi)} \abs{\xi_1}^{s} \abs{4^{-j} \xi_1}^{-s}
\abs{\widehat{\varphi^{r,t}}(4^{-j}\xi_1,2^{-j}\xi_2)} \, d\xi
\\
&\qquad
\leq
4^{-js}
\left( \int_{D_j W_{r,t}} \abs{\widehat{g^1_j}(\xi)}^2 \abs{\xi_1}^{2s} \, d\xi \right)^{1/2}
8^{- j/2}
\left(
\int_{D_j W_{r,t}} \abs{4^{-j} \xi_1}^{-2s}
\abs{\widehat{\varphi^{r,t}}(4^{-j}\xi_1,2^{-j}\xi_2)}^2 \, d\xi \right)^{1/2}
\\
&\qquad
\leq
4^{-js}
\left( \int_{D_j W_{r,t}} \abs{\widehat{g^1_j}(\xi)}^2 \abs{\xi_1}^{2s} \, d\xi \right)^{1/2}
\left(
\int_{W_{r,t}} \abs{\xi_1}^{-2s} \abs{\widehat{\varphi^{r,t}}(\xi)}^2 \, d\xi
\right)^{1/2}
\\
&\qquad
\lesssim
4^{-js} 4^{-rs}
\left( \int_{D_j W_{r,t}} \abs{\widehat{g^1_j}(\xi)}^2 \abs{\xi_1}^{2s} \, d\xi \right)^{1/2}
\left(
\int_{W_{r,t}} \abs{\widehat{\varphi^{r,t}}(\xi)}^2 \, d\xi
\right)^{1/2}.
\end{align*}
Hence, since $\abs{\xi_1}\leq\abs{\xi}$,
\begin{align*}
\abs{\ip{g^1_j}{\varphi^{r,t}_j}} \leq 4^{-js} 4^{-rs}
K_{r,t}
\left( \int_{D_j W_{r,t}} \abs{\widehat{g^1_j}(\xi)}^2 \abs{\xi}^{2s} \, d\xi \right)^{1/2},
\qquad (j \geq j_1(r,t)).
\end{align*}
Using \eqref{eq_abs_g1} we get
\begin{align*}
\abs{\ip{g^1_j}{\varphi^{r,t}_j}} \leq 4^{-js} 4^{-rs} K_{r,t}
\left( \int_{R_{-\alpha_j} (D_j W_{r,t})} \Phi(\xi)^2 \abs{\xi}^{2s} \, d\xi \right)^{1/2},
\qquad (j \geq j_1(r,t)).
\end{align*}
For $j \geq j_1(r,t)$, we have that $D_j W_{r,t} \subseteq V_{2^{-k}}$ so
\eqref{eq_inclusion_cones} implies that
$R_{-\alpha_j} (D_j W_{r,t}) \subseteq \cone_{\varepsilon}$. Combining
this with \eqref{eq_phi_int} and the bound on the number of overlaps of the sets
$\sett{R_{-\alpha_j} (D_j W_{r,t}): j \geq 1}$
granted by Lemma \ref{lemma_rot_overlaps}, we obtain
\begin{align}
\label{eq_bound_bigj}
&\sum_{j \geq j_1(r,t)} \abs{\ip{g^1_j}{\varphi^{r,t}_j}}^2 4^{2sj}
\lesssim  4^{-2rs} K_{r,t}^2.
\end{align}
Since by assumption $s<\moment+1/2$, we use \eqref{eq_estimate_Krt} 
with $\newa = 2 s$ and $\newb = 0$ and obtain
\begin{align}
\label{eq_sum_rt_1}
\sum_{r,t \in \Zst} \sum_{j \geq j_1(r,t)}
\abs{\ip{g^1_j}{\varphi^{r,t}_j}}^2 4^{2sj} < + \infty.
\end{align}
For $1 \leq j < j_1(r,t)$ we use the bound
\begin{align*}
(1+\abs{\xi}) \leq 1+ 4^j\abs{D_{-j} \xi} \leq 4^j(1+\abs{D_{-j} \xi}),
\end{align*}
and carry out a similar estimate.
\begin{align*}
&\abs{\ip{g^1_j}{\varphi^{r,t}_j}}
\leq 8^{- j/2} \int_{D_j W_{r,t}} \abs{\widehat{g^1_j}(\xi)}
\abs{\widehat{\varphi^{r,t}}(D_{-j}\xi)} \, d\xi
\\
&\qquad
\leq
8^{- j/2} 4^{jM} \int_{D_j W_{r,t}}
\abs{\widehat{g^1_j}(\xi)} (1+\abs{\xi})^{-M} (1+\abs{D_{-j} \xi})^{M}
\abs{\widehat{\varphi^{r,t}}(D_{-j}\xi)} \, d\xi
\\
&\qquad
\leq
4^{jM}
\left( \int_{D_j W_{r,t}} \abs{\widehat{g^1_j}(\xi)}^2 (1+\abs{\xi})^{-2M} \, d\xi \right)^{1/2}
8^{- j/2}
\left(
\int_{D_j W_{r,t}} (1+\abs{D_{-j}\xi})^{2M} 
\abs{\widehat{\varphi^{r,t}}(D_{-j}\xi)}^2 \, d\xi \right)^{1/2}
\\
&\qquad
\leq
4^{jM}
\left( \int_{D_j W_{r,t}} \abs{\widehat{g^1_j}(\xi)}^2 (1+\abs{\xi})^{-2M} \, d\xi \right)^{1/2}
\left(
\int_{W_{r,t}} (1+\abs{\xi})^{2M} \abs{\widehat{\varphi^{r,t}}(\xi)}^2 \, d\xi
\right)^{1/2}
\\
&\qquad
\lesssim
4^{jM} (1+4^r+2^t)^M
\left( \int_{D_j W_{r,t}} \abs{\widehat{g^1_j}(\xi)}^2 (1+\abs{\xi})^{-2M} \, d\xi \right)^{1/2}
\left(
\int_{W_{r,t}} \abs{\widehat{\varphi^{r,t}}(\xi)}^2 \, d\xi
\right)^{1/2}.
\end{align*}
Therefore, for $1 \leq j < j_1(r,t)$,
\begin{align*}
\abs{\ip{g^1_j}{\varphi^{r,t}_j}} \leq 4^{jM} (1+4^r+2^t)^M
K_{r,t}
\left( \int_{R_{-\alpha_j} (D_j W_{r,t})} \Phi(\xi)^2 (1+\abs{\xi})^{-2M} \, d\xi \right)^{1/2}.
\end{align*}
Hence, for $1 \leq j < j_1(r,t)$,
\begin{align*}
\abs{\ip{g^1_j}{\varphi^{r,t}_j}} 4^{js} \leq 4^{j_1(r,t) (M+s)} (1+4^r+2^t)^M
K_{r,t}
\left( \int_{R_{-\alpha_j} (D_j W_{r,t})} \Phi(\xi)^2 (1+\abs{\xi})^{-2M} \, d\xi \right)^{1/2}.
\end{align*}
Using again the bound on the number of overlaps of $\sett{R_{-\alpha_j} (D_j W_{r,t}): j \geq 1}$
of Lemma \ref{lemma_rot_overlaps}, this time combined
with \eqref{eq_big_bound_Phi}, we get
\begin{align*}
&\sum_{j=j_0}^{j_1(r,t)-1}
\abs{\ip{g^1_j}{\varphi^{r,t}_j}}^2 4^{2sj} 
\lesssim 4^{2 j_1(r,t) (M+s)} (1+4^r+2^t)^{2M} K^2_{r,t}.
\end{align*}
To bound this quantity we may assume that $j_1(r,t)=k+t-2r+2>j_0$, since otherwise $j_1(r,t)=j_0$ and the sum is empty.
Hence,
\begin{align*}
&\sum_{j=j_0}^{j_1(r,t)-1}
\abs{\ip{g^1_j}{\varphi^{r,t}_j}}^2 4^{2sj} 
\lesssim 4^{2 (t-2r) (M+s)} (1+4^r+2^t)^{2M} K^2_{r,t}
\\
&\qquad
\lesssim 2^{4(M+s)t} 4^{-4(M+s)r} (1+4^{2Mr}+2^{2Mt}) K^2_{r,t}
\\
&\qquad
= \left(4^{-(4M+4s)r} 2^{(4M+4s)t}
+
4^{-(2M+4s)r} 2^{(4M+4s)t}
+
4^{-(4M+4s)r} 2^{(6M+4s)t}
\right)K^2_{r,t}.
\end{align*}
To bound the sum over $r,t \in \Zst$ of the last expression, we
use \eqref{eq_estimate_Krt} with
$\newa = 4M+4s, \newb=4M+4s$; $\newa=2M+4s, \newb=4M+4s$;
$\newa = 4M+4s, \newb=6M+4s$. Note that in each case
$\newa \leq 4M + 4s < 2 \moment + 1$ by assumption and $\newb \geq 0 > -1$.
Hence the use of \eqref{eq_estimate_Krt} is justified and we conclude that
\begin{align}
\label{eq_sum_rt_2}
\sum_{r,t \in \Zst} \sum_{j=j_0}^{j_1(r,t)-1}
\abs{\ip{g^1_j}{\varphi^{r,t}_j}}^2 4^{2sj} < + \infty.
\end{align}
Finally, since $\varphi_j = \sum_{r,t} \varphi_j^{r,t}$,
\begin{align*}
&\sum_{j \geq j_0} \abs{\ip{g^1_j}{\varphi_j}}^2 4^{2js}
\leq
\sum_{r,t \in \Zst}
\sum_{j \geq j_1(r,t)} \abs{\ip{g^1_j}{\varphi^{r,t}_j}}^2 4^{2js}
+
\sum_{r,t \in \Zst}
\sum_{j=j_0}^{j_1(r,t)-1} \abs{\ip{g^1_j}{\varphi^{r,t}_j}}^2 4^{2js}
< + \infty.
\end{align*}
\end{proof}

\subsection{Estimates for the approximate reconstruction}
Theorem \ref{th_decay_wave} shows that the coefficients of the wavepacket expansion
in \eqref{eq_frame_expansion_2} decay fast away from the
wavefront set of $f$. We now show that the same is true for the coefficients in
the approximate wavepacket expansion in \eqref{eq_app_exp}. To avoid immaterial
technicalities, let us assume that $\winzero, \varphi \in \mathcal{S}(\Rtst)$.
\begin{theorem}
\label{th_decay_wave_approx}
In the setting of Theorem \ref{th_dual}, let $(x_0,\theta_0) \in \Rtst \times [0,2\pi)$
and let $\sett{(k_j, \lambda_j): j\geq 1}$ be the corresponding grid parameters
given by \eqref{eq_best_kj} and \eqref{eq_best_lambdaj}.
If $f \in \LtRt$ belongs to $H^s$ microlocally at $(x_0,\theta_0)$, then
\begin{align*}
\sum_{j \geq 1} \abs{\ip{f}{\widetilde{\varphi}_{j,k_j,\lambda_j}}}^2 4^{2js} < +\infty.
\end{align*}
\end{theorem}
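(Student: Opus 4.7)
The plan is to reduce to Theorem~\ref{th_decay_wave} via Remark~\ref{rem_mult}, which rewrites the coefficients as
\[
\ip{f}{\widetilde{\varphi}_{j,k_j,\lambda_j}} = \ip{g}{\psi_{j,k_j,\lambda_j}}, \qquad g := M_{1/\widetilde{m}} f,
\]
where $\psi_{j,k,\lambda} = \dilAjk T_\lambda \psi$. So I would apply Theorem~\ref{th_decay_wave} to the modified data $g$ and the modified window $\psi$, which amounts to checking the hypotheses for $\psi$ and verifying that $g$ is microlocally $H^s$ at $(x_0,\theta_0)$.

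First I would check the hypotheses of Theorem~\ref{th_decay_wave} for $\psi$. By construction $\widehat{\psi} = \eta \widehat{\varphi}$ with $\eta \in C^\infty_c(\Rtst)$ supported away from the origin; together with $\widehat{\varphi} \in \mathcal{S}$ this gives $\widehat{\psi} \in C^\infty_c(\Rtst)$ and hence $\psi \in \mathcal{S}(\Rtst)$. Because $\widehat{\psi}$ vanishes identically in a neighborhood of the origin, the decay-plus-vanishing-moment bound \eqref{eq_assum_decay_window_2_plus} holds for $\psi$ with an arbitrarily large exponent $\moment$. Since $f \in L^2(\Rtst)$, one may take $M = 0$, so the requirement $\moment > 2s + 2M - 1/2$ is trivially met.

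The crux of the argument is to show that $g = M_{1/\widetilde{m}} f$ is still microlocally $H^s$ at $(x_0,\theta_0)$. By \eqref{eq_approx_covering}, $\widetilde{m}$ is bounded above and bounded away from zero, so $1/\widetilde{m}$ is pointwise bounded. To upgrade this to symbol control, I would verify that each term $\widehat{\varphi}(B_{j,k}\xi) \overline{\widehat{\psi}(B_{j,k}\xi)}$ in the series defining $\widetilde{m}$ has $\xi$-derivatives damped by factors $\|B_{j,k}\|^{|\alpha|} \lesssim 2^{-j|\alpha|}$, while the Schwartz decay of $\widehat{\varphi}$ together with the rotation-dilation overlap estimates of Section~\ref{sec_overlaps} (applied to derivatives of $\widehat{\varphi}\,\overline{\widehat{\psi}}$) ensures uniform convergence of the differentiated series. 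Consequently $\widetilde{m}$, and therefore $1/\widetilde{m}$, has bounded derivatives of every order, so $M_{1/\widetilde{m}}$ is pseudolocal and preserves microlocal $H^s$ regularity; localizing near $x_0$ and separating into the near-diagonal and far-diagonal contributions (the former handled by the boundedness of $1/\widetilde{m}$ and the standard cone-cutoff lemma, the latter by the off-diagonal smoothness of the kernel of a Fourier multiplier with smooth symbol) establishes $g \in H^s(x_0,\theta_0)$.

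Combining these steps, Theorem~\ref{th_decay_wave} applied to $g$ and $\psi$ (with $M=0$) yields the claimed decay. The main obstacle is the symbol-class verification for $1/\widetilde{m}$: producing uniform bounds on $\partial^\alpha \widetilde{m}$ is a scale-by-scale computation in the spirit of Proposition~\ref{prop_scale_correlation}, where the vanishing of $\widehat{\psi}$ in a neighborhood of the origin is crucial for controlling the contribution of small $|\xi|$, and the parabolic contraction of $B_{j,k}$ controls the contribution of derivatives at each scale.
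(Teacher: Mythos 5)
Your overall strategy coincides with the paper's: use Remark \ref{rem_mult} to write $\ip{f}{\widetilde{\varphi}_{j,k_j,\lambda_j}}=\ip{M_{1/\widetilde{m}}f}{\psi_{j,k_j,\lambda_j}}$, note that $\psi$ satisfies the window hypotheses of Theorem \ref{th_decay_wave} (indeed with arbitrarily large $\moment$, and $M=0$ since $f\in\LtRt$), and reduce everything to showing that $M_{1/\widetilde{m}}$ preserves microlocal $H^s$ regularity at $(x_0,\theta_0)$. The gap is in that last step. What your computation actually yields is that $\partial^\alpha_\xi\widetilde{m}$ is \emph{bounded} for every $\alpha$: the damping $\norm{B_{j,k}}^{\abs{\alpha}}\lesssim 2^{-j\abs{\alpha}}$ together with the overlap estimates gives membership in $S^0_{0,0}$ and nothing more. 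You then assert that a Fourier multiplier whose symbol has bounded derivatives of all orders is pseudolocal with an off-diagonally smooth kernel. That implication is false: the symbol $m(\xi)=e^{2\pi i\xi\cdot v}$ has all derivatives bounded, yet $M_m$ is a translation, whose kernel is a delta distribution supported on a shifted diagonal and which moves the wavefront set. Bounded derivatives give $L^2$-boundedness, but microlocality requires the derivatives to \emph{decay} in $\xi$, i.e.\ a symbol class $S^0_{\rho,0}$ with $\rho>0$.

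The missing ingredient is exactly the estimate the paper proves: $\abs{\partial^\alpha_\xi\widetilde{m}(\xi)}\lesssim(1+\abs{\xi})^{-\abs{\alpha}/2}$, so that $\widetilde{m}$ lies in H\"ormander's class $S^0_{1/2,0}$ and the elliptic pseudodifferential calculus yields preservation of the $H^s$-wavefront set. To obtain it, combine your factor $2^{-jN}$ (with $N=\abs{\alpha}$) with the lower bound $\abs{B_{j,k}\xi}\geq 4^{-j}\abs{\xi}$: since $\Phi:=\widehat{\varphi}\,\overline{\widehat{\psi}}$ is smooth, compactly supported and vanishes near the origin, one has $\abs{\xi}^{N/2}\leq 2^{jN}\abs{B_{j,k}\xi}^{N/2}$, so
\begin{align*}
\abs{\xi}^{N/2}\abs{\partial^\alpha_\xi\widetilde{m_1}(\xi)}
\leq \sum_{j\geq 1}\sum_{k=0}^{2^j-1}\abs{B_{j,k}\xi}^{N/2}\Phi_N(B_{j,k}\xi)
\leq \norm{\,\abs{\cdot}^{N/2}\Phi_N}_*<+\infty,
\end{align*}
where $\Phi_N$ collects the derivatives of $\Phi$ up to order $N$ and the star-norm is finite by Proposition \ref{prop_scale_correlation}. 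Note the gain is only the square root of what the crude count of powers of $2^{-j}$ suggests; this is why the natural class here is $S^0_{1/2,0}$ rather than $S^0_{1,0}$, but it suffices. With this estimate supplied, your reduction goes through and the proof is complete.
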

As mentioned in Remark \ref{rem_mult}, the approximate coefficients can be rewritten as
\begin{align*}
\ip{f}{\widetilde{\varphi}_{j,k,\lambda}}=\ip{M_{1/\widetilde{m}} f}{\psi_{j,k,\lambda}},
\end{align*}
where $\psi_{j,k,\lambda}=\dilAjk T_\lambda \psi$, $\widetilde{m}$ is given by \eqref{eq_approx_m}
and $\widehat M_{1/\widetilde{m}}(f) := (1/\widetilde{m}) \widehat{f}$. Since $\psi$ satisfies the same conditions
that
$\varphi$, it follows from Theorem \ref{th_decay_wave} that the numbers $\ip{f}{\widetilde{\varphi}_{j,k,\lambda}}$
decay away from the $H_s$-wavefront set of $M_{1/\widetilde{m}}(f)$. Hence, it suffices to show that $f$ and
$M_{1/\widetilde{m}}(f)$ share the same $H_s$-wavefront set. We do so in the next lemma.

\begin{lemma}
Under the hypothesis of Theorem \ref{th_dual}, let $s \geq 0$. Then a function $f \in \LtRt$ 
belongs to the microlocal Sobolev space $H^s(x_0,\theta_0)$ if and only if $M_{1/\widetilde{m}}(f)$ does.
Consequently,
$f$ and $M_{1/\widetilde{m}}(f)$ have the same $H_s$-wavefront set.
\end{lemma}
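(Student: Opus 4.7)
The plan is to show that both Fourier multipliers $M_{1/\widetilde m}$ and $M_{\widetilde m}$ preserve microlocal $H^s$ regularity at every point $(x_0,\theta_0)$; since they are mutual inverses on $\LtRt$ by \eqref{eq_approx_covering}, this gives the biconditional, and equality of the $H^s$-wavefront sets follows immediately.

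The first step is to establish that the symbol $\widetilde m$, and therefore $1/\widetilde m$, is smooth on $\Rtst$ with partial derivatives of every order globally bounded. Since $\winzero, \varphi \in \mathcal{S}(\Rtst)$ and $\widehat\psi = \eta\widehat\varphi$ is compactly supported (hence Schwartz), the series \eqref{eq_approx_m} may be differentiated term by term. Applying $\partial^\alpha$ to $\widehat\varphi(B_{j,k}\xi)\overline{\widehat\psi(B_{j,k}\xi)}$ yields chain-rule factors bounded by $\|B_{j,k}\|^{|\alpha|} \leq 2^{-j|\alpha|}$ multiplying Schwartz functions of $B_{j,k}\xi$, and Proposition \ref{prop_scale_correlation} (or a minor adaptation of its proof) provides uniform convergence of the resulting series. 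Since $\widetilde m \geq \widetilde A > 0$ by \eqref{eq_approx_covering}, $1/\widetilde m$ inherits smoothness with bounded derivatives of all orders via the chain rule.

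The second step is a standard microlocal preservation argument for Fourier multipliers with smooth, uniformly bounded symbols. Let $f \in H^s(x_0,\theta_0)$ be witnessed by a cutoff $\eta$ with $\eta \equiv 1$ near $x_0$ and a cone $R_{\theta_0}\cone_\varepsilon$. Choose $\widetilde\eta$ compactly supported in $\{\eta \equiv 1\}$ with $\widetilde\eta(x_0) \neq 0$ and split
\begin{align*}
\widetilde\eta\, M_{1/\widetilde m} f = \widetilde\eta\, M_{1/\widetilde m}(\eta f) + \widetilde\eta\, M_{1/\widetilde m}((1-\eta)f).
\end{align*}
The second term is smoothing, since the derivative bounds from Step 1 render $M_{1/\widetilde m}$ pseudolocal, and the supports of $\widetilde\eta$ and $1-\eta$ are separated at positive distance. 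For the first term, pass to the frequency side,
\begin{align*}
\widehat{\widetilde\eta\, M_{1/\widetilde m}(\eta f)}(\xi) = \int \widehat{\widetilde\eta}(\xi-\zeta)\, \frac{\widehat{\eta f}(\zeta)}{\widetilde m(\zeta)}\, d\zeta,
\end{align*}
and split the $\zeta$-integral over $R_{\theta_0}\cone_{\varepsilon/2}$ and its complement. On $R_{\theta_0}\cone_{\varepsilon/2}$ the hypothesis yields $\langle\zeta\rangle^s\widehat{\eta f}(\zeta)\in L^2$ and $1/\widetilde m$ is bounded, so convolution with Schwartz $\widehat{\widetilde\eta}$ preserves the weighted-$L^2$ bound on any slightly narrower cone; on the complement, for $\xi$ in a narrower cone $R_{\theta_0}\cone_{\varepsilon'}$ with $\varepsilon' < \varepsilon/4$, the geometry forces $|\xi-\zeta| \gtrsim |\xi|+|\zeta|$, and the Schwartz decay of $\widehat{\widetilde\eta}$ dominates any $L^2$ growth of $\widehat{\eta f}$.

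The main obstacle will be Step 1: extracting uniform derivative bounds on $\widetilde m$ from the Schwartz assumption and adapting Proposition \ref{prop_scale_correlation} to derivatives (where the vanishing-moment factor is weaker but the compactness of $\operatorname{supp}\widehat\psi$ and the $2^{-j|\alpha|}$ gain help). Once these bounds are secured, the two halves of Step 2 are routine. Applying the symmetric argument with $M_{\widetilde m}$ yields the reverse implication, completing the biconditional and hence the claimed equality of $H^s$-wavefront sets.
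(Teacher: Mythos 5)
Your overall strategy coincides with the paper's: reduce everything to showing that the Fourier multiplier preserves microlocal $H^s$ regularity, using the ellipticity bound $0<\widetilde{A}\leq\widetilde{m}\leq\widetilde{B}$ from \eqref{eq_approx_covering} to pass between $M_{\widetilde{m}}$ and $M_{1/\widetilde{m}}$. But there is a genuine gap in Step 1, and it propagates into Step 2. You only claim that $\widetilde{m}$ (hence $1/\widetilde{m}$) has \emph{globally bounded} derivatives of all orders, i.e.\ that the symbol lies in $S^0_{0,0}$. That is not enough: a Fourier multiplier with a smooth, bounded symbol whose derivatives are all bounded need not be pseudolocal and need not preserve wavefront sets. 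For instance, $m(\xi)=2+\cos(2\pi v\cdot\xi)$ is real, pinched between $1$ and $3$, and has all derivatives bounded, yet $M_m=2I+\tfrac12(T_v+T_{-v})$ transports singularities from $x_0\pm v$ to $x_0$. So your assertion that ``the derivative bounds from Step 1 render $M_{1/\widetilde m}$ pseudolocal'' is unjustified, and the term $\widetilde\eta\,M_{1/\widetilde m}((1-\eta)f)$ cannot be dismissed as smoothing on the basis of what you have proved. (The cone-splitting estimate for the $\eta f$ term is fine.)

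What is actually needed, and what the paper proves, is the stronger symbol estimate $\abs{\partial^\alpha_\xi\widetilde{m}(\xi)}\lesssim(1+\abs{\xi})^{-\abs{\alpha}/2}$, placing $\widetilde{m}$ in H\"ormander's class $S^0_{\frac12,0}$; since $\rho=\tfrac12>0=\delta$, the standard calculus then gives pseudolocality and preservation of the $H^s$-wavefront set for the elliptic operator $M_{\widetilde m}$. You already have the key ingredient --- the chain-rule gain $\norm{B_{j,k}}^{\abs{\alpha}}\leq 2^{-j\abs{\alpha}}$ --- but you do not convert it into decay in $\xi$. The paper does this by writing $\widetilde{m}=\abs{\widehat{\winzero}}^2+\sum_{j,k}\Phi(B_{j,k}\xi)$ with $\Phi=\widehat{\varphi}\,\overline{\widehat{\psi}}$ compactly supported and vanishing near the origin, and then using $\abs{B_{j,k}\xi}\geq 4^{-j}\abs{\xi}$, so that $\abs{\xi}^{N/2}\leq 2^{jN}\abs{B_{j,k}\xi}^{N/2}$ absorbs the factor $2^{-jN}$ into $\abs{\xi}^{N/2}\Phi_N(B_{j,k}\xi)$; Proposition \ref{prop_scale_correlation} applied to $\abs{\xi}^{N/2}\Phi_N(\xi)$ (trivially admissible because $\Phi$ is compactly supported away from $0$) then gives a finite star-norm and hence the $(1+\abs{\xi})^{-N/2}$ decay. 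Inserting this step repairs your argument; without it, Step 2 does not close.
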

\begin{proof}
Let $g:=M_{1/\widetilde{m}}(f)$. We want to show that $g$ and $M_{\widetilde{m}} g$ have the same 
$H^s$-wavefront set.
The Fourier multiplier $M_{\widetilde{m}}$ is a pseudo-differential operator with Weyl symbol $\sigma(x,\xi)
:= \widetilde{m}(\xi)$. Under the hypothesis of Theorem \ref{th_dual}, by \eqref{eq_approx_covering},
$0< \widetilde{A} \leq \widetilde{m}(\xi) \leq \widetilde{B} < +\infty$.
This implies that $M_{\widetilde{m}}$ is elliptic of order 0. We will
show that $\sigma$ belongs to H\"ormander's symbol class $S^0_{\frac{1}{2},0}$, i.e.,
\begin{align*}
\abs{\partial^\alpha_{\xi} \partial^\beta_x \sigma(x,\xi)}
=\abs{\partial^\alpha_{\xi} \widetilde{m}(\xi)} \leq C_\alpha (1+\abs{\xi})^{-\frac{\abs{\alpha}}{2}}
\mbox{, for every multi-index }\alpha.
\end{align*}
Once this is established, it will follow that $M_{\widetilde{m}}$ preserves the $H_s$-wavefront set
(see \cite[Chapter 18]{ho85} or \cite[Chapter 2]{folland89}). Let us write
\begin{align*}
\widetilde{m}(\xi) := \abs{\widehat{\winzero}(\xi)}^2 + \widetilde{m_1}(\xi),
\end{align*}
where
\begin{align*}
&\widetilde{m_1}(\xi) := \sum_{j \geq 1} \sum_{k=0}^{2^j-1} \Phi(B_{j,k} \xi),
\\
&\Phi(\xi):= \widehat{\varphi}(\xi)\overline{\widehat{\psi}(\xi)}.
\end{align*}
Since $\abs{\widehat{\winzero}(\xi)}^2$ is a Schwartz function and
thus in $S^{0}_{\tfrac{1}{2},0}$ we focus on $\widetilde{m_1}$.
Note that, by the construction of $\psi$, $\Phi$ is a compactly supported
smooth function that vanishes in some neighborhood of the origin $(-\varepsilon,\varepsilon)^2$.

Let $\alpha$ be a multi-index, $N=\abs{\alpha}$ and
\begin{align*}
\Phi_N(\xi) := \sum_{\abs{\beta} \leq N} \abs{\partial^\beta_{\xi} \Phi(\xi)}.
\end{align*}
Therefore
\begin{align}
\label{eq_sum_derivatives}
\abs{\partial^\alpha_{\xi} \widetilde{m_1}(\xi)} \leq \sum_{j \geq 1} \sum_{k=0}^{2^j-1} 2^{-jN}
\Phi_N(B_{j,k} \xi).
\end{align}
For each $j,k$, $\abs{B_{j,k} \xi} \geq 4^{-j}\abs{\xi}$ and consequently
$\abs{\xi}^{N/2} \leq 2^{jN} \abs{B_{j,k}\xi}^{N/2}$. Hence, setting
$\Phi^*_N(\xi):=\abs{\xi}^{N/2}\Phi_N(\xi)$ we have that
\begin{align*}
&\abs{\xi}^{N/2} \abs{\partial^\alpha_{\xi} \widetilde{m_1}(\xi)} \leq
\sum_{j \geq 1} \sum_{k=0}^{2^j-1} \Phi^*_N(B_{j,k} \xi) \leq \norm{\Phi^*_N}_*.
\end{align*}
Since the support of $\Phi^*_N$ is compact and does not contain $0$, Proposition
\ref{prop_scale_correlation} implies that $\norm{\Phi^*_N}_* <+\infty$.
Similarly,
\begin{align*}
\abs{\partial^\alpha_{\xi} \widetilde{m_1}(\xi)}
\leq
\sum_{j \geq 1} \sum_{k=0}^{2^j-1} 2^{-jN} \Phi_N(B_{j,k} \xi)
\leq
\sum_{j \geq 1} \sum_{k=0}^{2^j-1} \Phi_N(B_{j,k} \xi) \leq \norm{\Phi_N}_* < +\infty.
\end{align*}
The last two estimates imply that
$\abs{\partial^\alpha_{\xi} \widetilde{m_1}(\xi)} \lesssim
(1+\abs{\xi})^{-\frac{N}{2}}=(1+\abs{\xi})^{-\frac{\abs{\alpha}}{2}}$, as desired.
\end{proof}

\end{document}